\documentclass[12pt]{amsart}
\usepackage{amsmath}
\usepackage{amssymb}
\newtheorem{theorem}{Theorem}
\newtheorem{proposition}[theorem]{Proposition}
\newtheorem{lemma}[theorem]{Lemma}

\newenvironment{proof*}{\noindent {}}{\hfill
$\Box$ \vskip 2mm}
\newcommand{\B}{\mathbb B}
\newcommand{\C}{\mathbb C}
\newcommand{\D}{\mathbb D}

\newcommand{\N}{\mathbb N}

\newcommand{\Om}{\Omega}
\newcommand{\eps}{\varepsilon}
\newcommand{\vphi}{\varphi}

\def\Re{\operatorname{Re}}
\def\Im{\operatorname{Im}}

\title[Domains in $\C^n$ with locally Levi-flat boundaries]
{A characterization of domains in $\C^n$ with locally Levi-flat boundaries}

\author{Nikolai Nikolov, Pascal J. Thomas}

\address{Institute of Mathematics and Informatics\\ Bulgarian Academy
of Sciences\\ Acad. G. Bonchev 8, 1113 Sofia,
Bulgaria}\email{nik@math.bas.bg}

\address{Universit\'e de Toulouse\\ UPS, INSA, UT1, UTM \\
Institut de Math\'e\-matiques de Toulouse\\
F-31062 Toulouse, France} \email{pthomas@math.univ-toulouse.fr}

\subjclass[2010]{32F45, 32T27}

\keywords{Levi-flat boundary, Kobayashi metric, Bergman metric, Bergman kernel}

\begin{document}

\begin{thanks}{This paper was written during the stay if the
first-named author at the Paul Sabatier University, Toulouse in
October, 2011, and the stay of the second-named author at the
Institute if Mathematics and Informatics, BAS in November, 2011,
supported by the CNRS--BAS programme ``Convention d'\'echanges'',
No 23811.}
\end{thanks}

\begin{abstract} A domain in $\C^n$ with Levi-flat boundary near a given
point is characterized in terms of the boundary behavior of the
Kobaya\-shi or Bergman metrics, or of the Bergman kernel. Some
results are given in the case of intermediate values of the
rank of the Levi form.
\end{abstract}

\maketitle

\section{The main results}

This note is motivated by Ohsawa's question \cite[Q2]{Ohs} about
the characterization of a bounded pseudoconvex domain
$\Om\subset\C^n$ with locally Levi-flat boundary (i.e. the rank of
the Levi form is zero) in terms of the boundary behavior of the
Bergman metric $\beta_\Om.$  This follows from some of the results
of Siqi Fu's Ph. D. dissertation \cite{Fu1}, which have been made
more widely available recently \cite{Fu2}. 
We shall however give an answer to
Ohsawa's question,
as well as to a similar question for the Kobayashi
metric $\kappa_\Om$, using  weaker regularity assumptions
and somewhat different methods.  The 
Levi-flat case is only a special case of Fu's results, 
who also related the rank of the Levi form and rate of growth of the 
Bergman kernel. We provide similar results using our own methods,
 under smoothness hypotheses which are slightly different from Fu's.

Recall that
$$
\kappa_\Om(z;X)=\inf\{|t|:\exists\vphi\in\mathcal O(\Bbb D,\Om):
\vphi(0)=z,t\vphi'(0)=X\}$$
($\Bbb D$ is the unit disc) and
$$\beta_\Om(z;X)=m_\Om(z;X)/k_\Om(z),
$$
where $$k_\Om(z)=\sup\{|f(z)|:f\in L^2(\Om)\cap\mathcal
O(\Om),\;||f||_{L^2(\Om)}\le1\}$$ is the square root of Bergman
kernel (restricted to the diagonal) and
$$m_\Om(z;X)=\sup\{|f'(z)X|:f\in L^2(\Om)\cap\mathcal O(\Om),\;||f||_{L^2(\Om)}=1,\;f(z)=0\}.$$

Suppose that $p$ is a $\mathcal C^2$-smooth boundary point of $\Om.$  Then
for each $z\in D$ near $p$ there exists a unique point $p(z)\in
\partial \Om$ such that $z = p(z) + \delta_\Om(z) n_{p(z)}$, where
$\delta_\Om$ is the distance $\partial D$ and $n_{p(z)}$ is the
inner normal vector at $p(z).$ For such a $z$ and $X \in \C^n$,
there is a unique orthogonal decomposition $X=X_N+X_T$ where
$X_T\in T^{\C}_{p(z)}\partial \Om.$

\begin{theorem}
\label{koba} Let $p$ be a boundary point of a bounded domain
$\Om\subset\C^n$ such that $\partial \Omega$ is $\mathcal
C^2$-smooth near  $p$.

Then $\partial\Om$ is Levi-flat near $p$ if and only if there exist a
neighborhood $U$ of $p$ and a constant $c>1$ such that for any
$z\in\Om\cap U$ and any $X\in\C^n\setminus \{0\}$,
\begin{equation}
\label{growth} c^{-1}<\frac{\kappa_\Om
(z;X)}{\frac{\|X_N\|}{\delta_\Om(z)}+ \|X\|}<c.
\end{equation}
\end{theorem}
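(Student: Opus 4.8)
The plan is to prove the two implications separately, reducing both to the behaviour of $\kappa_\Om$ near points of $\partial\Om$ close to $p$, together with a short list of auxiliary estimates. Fix a ball $B\supset\bar\Om$. Since $B(z,\delta_\Om(z))\subset\Om\subset B$ for $z\in\Om$, one has, with a constant depending only on $\Om$,
\[
\|X\|/\operatorname{diam}\Om\ \lesssim\ \kappa_\Om(z;X)\ \le\ \|X\|/\delta_\Om(z)\qquad(z\in\Om,\ X\in\C^n\setminus\{0\}).
\]
I shall also invoke two classical boundary estimates for $\kappa_\Om$, both obtained from plurisubharmonic barriers: (a) if $\Om$ is pseudoconvex near a $\mathcal C^2$ boundary point, then $\kappa_\Om(z;X)\gtrsim\|X_N\|/\delta_\Om(z)$ for $z\in\Om$ near that point; and (b) if the Levi form at a $\mathcal C^2$ boundary point $q$ has a positive eigenvalue along a unit vector $v\in T^{\C}_q\partial\Om$, then $\kappa_\Om(q+tn_q;v)\to\infty$ (indeed $\gtrsim t^{-1/2}$) as $t\to0^+$. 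When $\Om$ is not globally pseudoconvex these follow by gluing the relevant local barrier to a global bounded plurisubharmonic function via a maximum construction.

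Suppose first that \eqref{growth} holds on $\Om\cap U$. I claim $\partial\Om$ is pseudoconvex near $p$: otherwise the Levi form has a negative eigenvalue at some $q\in\partial\Om\cap U$, and in suitable local holomorphic coordinates (with $q=0$ and $n_q$ the $\operatorname{Re}z_n$-direction) $\Om$ reads $\{\operatorname{Re}z_n>\lambda(z',\operatorname{Im}z_n)\}$ with $\lambda(z_1e_1,y)\le-b_0|z_1|^2+Cy^2$ for small $(z_1,y)$ and some $b_0>0$, the holomorphic quadratic terms having been removed by a shear. For $z_t=(0,\dots,0,t)$ the analytic discs $\zeta\mapsto(a\zeta^2,0,\dots,0,t+c\zeta)$ then lie in $\Om$ once $|c|$ is a small enough multiple of $t^{3/4}$; taking $a$ of maximal admissible size gives $\kappa_\Om(z_t;n_q)\lesssim t^{-3/4}=o(t^{-1})$, contradicting, for small $t$, the lower half of \eqref{growth} applied to $X=n_q$ (so that $X_N=n_q$, $X_T=0$, $\delta_\Om(z_t)\asymp t$). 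Next I claim the Levi form vanishes on $\partial\Om$ near $p$: it is now $\ge0$ there, and if some nearby $q\in\partial\Om$ had a positive eigenvalue along $v$, then, $\Om$ being pseudoconvex near $q$, estimate (b) would give $\kappa_\Om(q+tn_q;v)\to\infty$, whereas the upper half of \eqref{growth} applied to $X=v$ (so $X_N=0$) forces $\kappa_\Om(q+tn_q;v)\le c\|v\|$. Hence $\partial\Om$ is Levi-flat near $p$.

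Conversely, assume $\partial\Om$ is Levi-flat, hence pseudoconvex, near $p$. The lower bound in \eqref{growth} is immediate from the displayed inequality and (a): $\kappa_\Om(z;X)\gtrsim\max\{\|X_N\|/\delta_\Om(z),\|X\|\}\gtrsim\|X_N\|/\delta_\Om(z)+\|X\|$. For the upper bound, the point is the Levi foliation. Since $\partial\Om$ is $\mathcal C^2$ and Levi-flat, $T^{\C}\partial\Om$ is an involutive $\mathcal C^1$ distribution, so $\partial\Om\cap U$ is foliated by complex hypersurface leaves; over a compact neighbourhood of $p$ the leaf $L_{p(z)}$ through $p(z)$ is a holomorphic graph over $T^{\C}_{p(z)}\partial\Om$ of some fixed size $\rho_0>0$ independent of $z$. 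Its translate $\widetilde L_z:=L_{p(z)}+\delta_\Om(z)\,n_{p(z)}$ is a complex hypersurface through $z=p(z)+\delta_\Om(z)\,n_{p(z)}$, tangent there to $T^{\C}_{p(z)}\partial\Om\ni X_T$; and for $z$ close enough to $p$ it stays inside $\Om$, at distance $\ge\delta_\Om(z)/2$ from $\partial\Om$, over the ball $B(z,\rho_0/2)$ (the normals of $\partial\Om$ varying continuously). Parametrising this piece of $\widetilde L_z$ as a graph and thickening it by $\delta_\Om(z)/2$ in the direction $n_{p(z)}$ produces a holomorphic map $F_z\colon\D^n\to\Om$ with $F_z(0)=z$, with $dF_z(0)e_j$ a fixed multiple (of modulus $\asymp\rho_0$) of an orthonormal frame of $T^{\C}_{p(z)}\partial\Om$ for $j<n$, and $dF_z(0)e_n=\tfrac12\delta_\Om(z)\,n_{p(z)}$. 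Writing $X=dF_z(0)Y$ and decomposing $X=X_T+X_N$ accordingly, $\|Y\|_\infty\lesssim\|X_T\|/\rho_0+\|X_N\|/\delta_\Om(z)$, so $\kappa_\Om(z;X)\le\kappa_{\D^n}(0;Y)=\|Y\|_\infty\lesssim\|X\|+\|X_N\|/\delta_\Om(z)$, which together with the lower bound gives \eqref{growth}.

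The step I expect to be the main obstacle is establishing the boundary estimates (a) and (b) in exactly the generality needed — in particular (b) at a possibly only weakly pseudoconvex point, and, in both cases, passing from purely local plurisubharmonic barriers to estimates valid for $\kappa_\Om$ on all of $\Om$, which is not assumed globally pseudoconvex — together with the regularity input that a $\mathcal C^2$ Levi-flat hypersurface is genuinely foliated by complex (hence smooth) hypersurfaces, varying continuously enough that the size $\rho_0$ above can be chosen uniform near $p$. Once these are in hand, the remaining verifications are elementary.
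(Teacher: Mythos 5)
Your architecture coincides with the paper's (negative Levi eigenvalues are excluded via a Krantz-type upper bound $\kappa_\Om(q+tn_q;n_q)\lesssim t^{-3/4}$ against the lower half of \eqref{growth}; positive eigenvalues via a tangential blow-up against the upper half; the converse via the Levi foliation and a flattened model domain), but the proof has a genuine gap at your estimate (b). You assert as ``classical'' that a single positive Levi eigenvalue along $v\in T^{\C}_q\partial\Om$ at a $\mathcal C^2$ boundary point forces $\kappa_\Om(q+tn_q;v)\to\infty$ (indeed $\gtrsim t^{-1/2}$), to be obtained from plurisubharmonic barriers glued into global ones. This is not classical in the generality you need: at the point where you invoke it you only know that the Levi form is $\ge 0$ near $p$, so $q$ may be weakly pseudoconvex with degenerate directions, the domain is not globally pseudoconvex, and the boundary is only $\mathcal C^2$. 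Producing a plurisubharmonic function, bounded by $1$ on all of $\Om$, vanishing at $q+tn_q$ and with complex Hessian $\gtrsim 1/t$ in the direction $v$ is precisely the hard analytic content here (it is what Catlin's and Fu's estimates provide under constant Levi rank or higher smoothness); your maximum-gluing remark handles globalization but not the existence of the local barrier. The paper circumvents exactly this: Proposition \ref{tang} proves the blow-up by a direct analysis of competing analytic discs, after enlarging the domain to a model where the non-distinguished directions contribute a controlled negative term, and obtains only $\kappa\gtrsim\delta^{-1/6}$ --- which suffices and requires no pseudoconvexity and no barrier. Unless you either supply the barrier construction or replace (b) by such a disc argument, this direction of the theorem is not proved. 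Your estimate (a) ($\kappa_\Om(z;X)\gtrsim\|X_N\|/\delta_\Om(z)$ near a merely pseudoconvex $\mathcal C^2$ point) is likewise asserted rather than proved; fortunately you only use it where the boundary is already known to be Levi-flat, and there it follows from your own foliation picture (after flattening, $\Om\cap U$ sits inside a region of the form $\{\Re w_1<\eps|w_1|\}\cap(\text{polydisk})$, and projecting to $w_1$ gives the normal lower bound) --- this is exactly how the paper gets it, via Lemma \ref{bihol} and the inclusion into the model domain $G$.

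Two smaller points. First, your derivation of the foliation from ``Frobenius for the involutive $\mathcal C^1$ distribution $T^{\C}\partial\Om$'' glosses over the regularity issue that the paper addresses by citing Barrett--Fornaess and Sommer--Kraut: that the leaves are genuinely complex hypersurfaces depending continuously on the base point, with holomorphic graphs of a uniform size $\rho_0$, is a theorem at $\mathcal C^2$ regularity, not a formal consequence of Frobenius. Second, your upper bound via the translated, thickened leaf $F_z:\D^n\to\Om$ is correct and is essentially the paper's argument (inclusion of $F=\{\Re w_1+\eps|w_1|<0\}\cap\eps^2\D^n$ into the flattened domain plus the product property of $\kappa$), so that part needs no change.
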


\begin{theorem}
\label{bergman} Let $p$ be a boundary point of a bounded
pseudoconvex domain $\Om\subset\C^n$ such that $\partial \Omega$
is $\mathcal C^2$-smooth near $p$. Then the following three
conditions are equivalent:

(i) $\partial\Om$ is Levi-flat near $p;$

(ii) there exist a neighborhood $U$ of $p$ and a constant $c>1$
such that for any $z\in\Om\cap U,$
\begin{equation}
\label{kernel}
c^{-1}<k_\Om(z)\delta_\Om(z)<c ;
\end{equation}

(iii) there exist a neighborhood $U$ of $p$ and a constant $c>1$
such that for any $z\in\Om\cap U$ and any $X\in\C^n\setminus
\{0\}$,
\begin{equation}
\label{metric}
c^{-1}<\frac{\beta_\Om
(z;X)}{\frac{\|X_N\|}{\delta_\Om(z)}+ \|X\|}<c.
\end{equation}
\end{theorem}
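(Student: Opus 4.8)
The plan is to prove $(i)\Rightarrow(ii)$ and $(i)\Rightarrow(iii)$ directly, and to deduce $(ii)\Rightarrow(i)$ and $(iii)\Rightarrow(i)$ by contraposition. Throughout write $\delta=\delta_\Om$ and let $\lesssim,\gtrsim,\asymp$ denote (in)equalities up to constants depending only on $\Om$ and $p$; I shall also use the Carath\'eodory metric $\gamma_\Om(z;X)=\sup\{|f'(z)X|:f\in\OO(\Om,\D),\ f(z)=0\}$. The first step is to record two lower bounds valid for $\Om$ as in the theorem and every $z\in\Om$ near $p$: $(\mathrm a)$ $k_\Om(z)\,\delta(z)\gtrsim1$, and $(\mathrm b)$ $\beta_\Om(z;X)\gtrsim\|X\|$. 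For $(\mathrm a)$, the complex line $\ell_z=z+\C n_{p(z)}$ is transverse to $\partial\Om$ at $p(z)$, so $\delta_{\Om\cap\ell_z}(z)=\delta(z)$; reflecting $z$ in the real tangent line to $\partial(\Om\cap\ell_z)$ at $p(z)$ produces a point $\zeta_0\notin\overline{\Om\cap\ell_z}$ at distance $\asymp\delta(z)$ from $z$ and $\gtrsim\delta(z)$ from $\Om\cap\ell_z$, so $g(\zeta)=\delta(z)\,(\zeta-\zeta_0)^{-2}$ is $L^2$ on $\Om\cap\ell_z$ with $\|g\|\lesssim1$ and $|g(z)|\asymp\delta(z)^{-1}$; hence $K_{\Om\cap\ell_z}(z,z)\gtrsim\delta(z)^{-2}$, and extending $g$ from $\ell_z$ to $\Om$ by the Ohsawa--Takegoshi theorem (with constant depending only on $\operatorname{diam}\Om$ --- this is where pseudoconvexity enters) gives $K_\Om(z,z)\gtrsim\delta(z)^{-2}$. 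For $(\mathrm b)$, multiplying a $\gamma_\Om$-extremal map (vanishing at $z$) by a Bergman-kernel extremal function yields $m_\Om(z;X)\ge\gamma_\Om(z;X)\,k_\Om(z)$, i.e.\ $\beta_\Om\ge\gamma_\Om$, and $\gamma_\Om(z;X)\ge\gamma_B(z;X)\gtrsim\|X\|$ for any fixed ball $B\supset\Om$.

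For \emph{necessity}, assume $\partial\Om$ is Levi-flat near $p$; the idea is to insert an explicit product domain into $\Om$. For $z$ near $p$, the leaf $L$ of the Levi foliation of $\partial\Om$ through $p(z)$ is a $\mathcal C^1$ --- hence automatically smooth --- complex hypersurface lying in $\partial\Om$, and the complex tangent spaces of $\partial\Om$ along it vary $\mathcal C^1$-smoothly. Composing translation by $-p(z)$ with the holomorphic map $w\mapsto w-g(z')$ straightening $L$ to $\{w=0\}$ and with a rotation of $w$, one obtains holomorphic coordinates $(z',w)$ at $p(z)$, with uniformly bounded derivatives, in which $L=\{w=0\}$, $T_0\partial\Om=\{\Re w=0\}$, and $\partial\Om=\{\Re w=\Im w\cdot\chi(z',\Im w)\}$ with $\chi\in\mathcal C^1$, $\chi(0,0)=0$, $\|\chi\|_{\mathcal C^1}$ bounded. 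Hence, for $z$ close enough to $p$, $\Om$ contains the product $P=B^{n-1}(0,r)\times H$ with $r$ fixed and $H$ a half-disc of radius $\asymp\delta(z)$ about the image of $z$, while $X_T$ maps into $\{w=0\}$ and the image of $X_N$ has $w$-component $\asymp\|X_N\|$. By monotonicity of $K_\Om(z,z)$ and $m_\Om(z;X)$ under $P\subset\Om$ and the product formulas $K_{G_1\times G_2}=K_{G_1}K_{G_2}$, $m^2_{G_1\times G_2}=m^2_{G_1}K_{G_2}+K_{G_1}m^2_{G_2}$, one gets $K_\Om(z,z)\le K_P\lesssim\delta(z)^{-2}$, which with $(\mathrm a)$ proves $(ii)$, and $m_\Om(z;X)\le m_P\lesssim\|X\|\,\delta(z)^{-1}+\|X_N\|\,\delta(z)^{-2}$, whence $\beta_\Om(z;X)=m_\Om(z;X)/k_\Om(z)\lesssim\|X\|+\|X_N\|/\delta(z)$ by $(ii)$. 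For the reverse bound in $(iii)$, extend from $\ell_z$ to $\Om$ (again by Ohsawa--Takegoshi) the planar function $G(\zeta)=\delta(z)\,(\zeta-z)(\zeta-\zeta_0)^{-3}$, for which $G(z)=0$, $\|G\|_{L^2(\Om\cap\ell_z)}\lesssim1$, $|G'(z)|\asymp\delta(z)^{-2}$; this gives $m_\Om(z;X_N)\gtrsim\|X_N\|\,\delta(z)^{-2}$, so $\beta_\Om(z;X_N)\gtrsim\|X_N\|/\delta(z)$ by $(ii)$, which together with $(\mathrm b)$ yields $(iii)$.

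For \emph{sufficiency}, suppose $\partial\Om$ is not Levi-flat near $p$. Since $\Om$ is pseudoconvex the Levi form of $\partial\Om$ is positive semidefinite and, near $p$, not identically zero, so there are $q\in\partial\Om$ arbitrarily close to $p$ and a unit $v\in T^{\C}_q\partial\Om$ on which the Levi form is positive. Near such a $q$ two standard directional estimates hold, for $z\to q$ along the inner normal ray: $K_\Om(z,z)\gtrsim\delta(z)^{-3}$ (extend $L^2$ holomorphic functions from the complex $2$-plane through $q$ spanned by $v$ and $n_q$, on which $\Om$ is strictly pseudoconvex at $q$, and use the classical lower bound for the Bergman kernel of a strictly pseudoconvex domain in $\C^2$; compare \cite{Fu2}), and $\gamma_\Om(z;v)\gtrsim\delta(z)^{-1/2}$ (from a bounded holomorphic peak function at $q$ of strict type $2$ in the $v$-direction, built on the pseudoconvex $\Om$). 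The first makes $k_\Om(z)\,\delta(z)\to\infty$, contradicting \eqref{kernel}; the second, with $\beta_\Om\ge\gamma_\Om$ and the fact that $\|v_N\|/\delta(z)+\|v\|=\|v\|$ stays bounded, makes $\beta_\Om(z;v)/\|v\|\to\infty$, contradicting \eqref{metric}. Hence $(ii)\Rightarrow(i)$ and $(iii)\Rightarrow(i)$.

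The hard part is the local step in the necessity argument: fitting the product $P$ inside $\Om$ near a Levi-flat boundary point under only $\mathcal C^2$ regularity. Unlike the real-analytic case the whole foliation need not admit a holomorphic straightening, so one must work with one leaf at a time --- using that each leaf is a genuine complex hypersurface contained in $\partial\Om$, straightening it by a holomorphic map, and checking that the residual $\mathcal C^2$ datum $\chi$ and the coordinate changes stay controlled uniformly as $p(z)$ varies near $p$. The remaining inputs --- the Ohsawa--Takegoshi-based lower bounds (where pseudoconvexity is essential) and the strictly-pseudoconvex directional estimates used in the converse --- are standard, in the spirit of the results of \cite{Fu2} that the theorem refines.
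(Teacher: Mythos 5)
The serious problem is in your sufficiency argument for (iii)$\Rightarrow$(i). To make $\beta_\Om(z;v)$ blow up in a tangential direction $v$ at a point $q$ where the Levi form has a positive eigenvalue, you invoke $\beta_\Om\ge\gamma_\Om$ together with $\gamma_\Om(z;v)\gtrsim\delta_\Om(z)^{-1/2}$, the latter coming from ``a bounded holomorphic peak function at $q$ of strict type $2$ in the $v$-direction''. When the Levi form at $q$ is degenerate in some tangential directions --- which is exactly the interesting case here, rank between $1$ and $n-2$ --- no such function is available: the Levi polynomial at $q$ has no sign in the null directions of the Levi form, so it is not a local peak function, and slicing does not help because bounded holomorphic functions on the strictly pseudoconvex $2$-dimensional slice do not extend boundedly to $\Om$. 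A lower bound for $\gamma_\Om$ (unlike one for $\kappa_\Om$) requires producing global holomorphic functions on $\Om$, and there is no standard construction at a Levi-degenerate pseudoconvex boundary point. The paper avoids this entirely: it bounds $\beta_\Om=m_\Om/k_\Om$ from below by combining $m_{\Om'}(z';X')\preceq m_\Om((z',0);(X',0))$ (Ohsawa--Takegoshi applied to the maximal-rank slice $\Om'\subset\C^{k+1}$) with the \emph{upper} bound $k_\Om(z',0)\preceq k_{\Om'}(z')$; that upper bound is the content of Lemma \ref{kernslice} and is proved using the Sommer--Kraut foliation of a full neighborhood of $q$ by integral manifolds of the kernel of the Levi form (\cite{Som}, \cite{Krt}). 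This foliation-based kernel estimate is the ingredient your proposal is missing, and without it (or some substitute such as Fu's plurisubharmonic/Sibony-metric construction in \cite{Fu2}) the implication (iii)$\Rightarrow$(i) is not established.

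The rest is essentially sound. Your (ii)$\Rightarrow$(i) argument is correct and in fact lighter than the paper's, since contradicting \eqref{kernel} only needs the easy Ohsawa--Takegoshi direction $k_\Om\succeq k_{\Om''}$ from a $2$-dimensional strictly pseudoconvex slice plus localization of the Bergman kernel of that pseudoconvex slice; the paper proves the two-sided estimate $k_\Om(z)\delta_\Om(z)\asymp\delta^{-k/2}$ because it also wants Proposition \ref{rankcount}. Your necessity argument (straighten the leaf through $p(z)$ holomorphically, inscribe a product $B^{n-1}\times H$, use monotonicity and the product formulas, and get the matching lower bounds from one-variable functions on the normal line extended by Ohsawa--Takegoshi) is the same strategy as the paper's Lemma \ref{bihol} and Section \ref{pfest}. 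But the step you defer --- that the leaves are holomorphic graphs over a ball of uniform size with uniformly controlled derivatives, so that the straightening maps and the residual datum $\chi$ are uniform in $p(z)$ --- is genuinely nontrivial for a merely $\mathcal C^2$ Levi-flat hypersurface; the paper gets it from the regularity of the Levi foliation in \cite{BF}, and you should cite or prove an equivalent statement rather than treat it as a checking exercise.
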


The proofs are given in sections \ref{sufkoba}, \ref{sufberg} and \ref{pfest}.

Section \ref{ranklevi} is devoted to the question of how we can recover
the rank of the Levi form at a boundary point from the growth of the Bergman
kernel near that point. There is a good fit in the $\mathcal C^\infty$-smooth case,
see Theorem \ref{smoothrank}.

In the last section \ref{sharp}, refinements of the above estimates are proved in the convex
and the planar cases.
\smallskip

{\noindent\bf Acknowledgement.} We thank Takeo Ohsawa and Peter Pflug
for reading an earlier version of this work.
\smallskip

{\noindent\bf Remark.} 
We had not realized, as we should have, that eighteen years ago,
 chapter IV of Fu's dissertation \cite{Fu1} 
had answered Ohsawa's question and various extensions to 
intermediate ranks and other metrics, in the smooth, pseudoconvex case. 
Although many technical tools are the same, such as choosing a normal form of the
coordinates to prove that certain polydisks (or more general sets) are contained
in the domain $\Omega$,
some differences should be noted between his work and ours.
We refer to the generally available version \cite{Fu2}.

\begin{itemize}
\item
Our Theorems \ref{koba} and \ref{bergman}
and Proposition \ref{rankcount} only require $\mathcal C^2$ smoothness, exploiting 
the optimal hypotheses of \cite{Krt}.

In \cite[Theorem 1.1]{Fu2}, which relates the rank of the Levi form
(assumed to be constant in a neighborhood of the base point) and the behavior of pseudometrics,
 $\Omega$ is assumed to be 
``smooth" near the point under consideration, although after examination it seems 
that the crucial tool \cite[Theorem 6.1]{Cat2} does not require
more than $\mathcal C^2$ smoothness and that
a $\mathcal C^3$ assumption is be enough to obtain \cite[Proposition 3.2]{Fu2}
(and perhaps one could improve that proof to require
only the $\mathcal C^2$ assumption). 
\item
In \cite[Theorem 1.1]{Fu2}, $\Omega$ is assumed to be pseudoconvex (because 
global plurisubharmonic functions are constructed, which we dispense with,
and \cite{Fu2} deals with the Sibony metric, which we do not treat); our Theorem \ref{koba} about the Kobayashi infinitesimal pseudometric does not require pseudoconvexity.

On the other hand,  \cite[Theorem 1.1]{Fu2} provides 
sharp estimates for pseudometrics applied to
tangent vectors in any direction, in terms of the Levi form.
\item
Our results about the relation between the local rank of the Levi form and
the growth of the Bergman kernel (Proposition \ref{rankcount}, Theorem \ref{smoothrank})
require $\mathcal C^\infty$ smoothness in one direction, and with our
method there is no way
to bound  the degree of smoothness required, so 
Fu's method, requiring implicitly only $\mathcal C^3$
smoothness, yields a stronger result there. 
\end{itemize}

\section{Proof of the sufficiency in Theorem \ref{koba}}
\label{sufkoba}

Let $\rho$ be a $\mathcal C^2$-smooth defining function of $\Om$ near $p.$ Suppose that there is some point $q\in\partial D$
near $p$ such that the Levi form of $\rho$ at $q$ (restricted on $T^{\C}_q\partial \Om$)
has a non-zero eigenvalue.

If it is a negative eigenvalue, it follows by \cite[Theorem 1.1]{Kra} that
$$\limsup_{x\to 0+}\delta_\Om^{3/4}(q_x)\kappa_\Om (q_x;n_q)<\infty,$$ where
$q_x=q+xn_q.$ Therefore the left hand-side estimate
in \eqref{growth} cannot hold for normal vectors.

On the other hand, if  the Levi form admits a positive eigenvalue,
Proposition \ref{tang} shows that the right-hand side of \eqref{growth}
cannot hold for all tangential vectors.

\begin{proposition}\label{tang}
Suppose that the Levi form admits a positive eigenvalue at $q\in\partial D$.
Then there is an $X\in T^{\C}_q\partial \Om$ such that
$$\limsup_{x\to 0+} \kappa (q_x;X) = \infty.
$$
\end{proposition}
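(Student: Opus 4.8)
The plan is to exploit the positive eigenvalue of the Levi form at $q$ to produce, near $q$, a family of analytic discs into $\Om$ whose derivatives at the origin are large multiples of a fixed tangential vector $X$, thereby forcing $\kappa_\Om(q_x;X)\to\infty$ as $x\to 0^+$. Concretely, let $X\in T^{\C}_q\partial\Om$ be an eigenvector for a positive eigenvalue of the Levi form of $\rho$ at $q$. After an affine change of coordinates putting $q$ at the origin with $n_q$ pointing in the $\Re w$-direction (write $z=(w,z')$), the defining function takes the form $\rho(z)=-\Re w+\lambda|z_1|^2+o(|z|^2)$ with $\lambda>0$, where $z_1$ is the $X$-direction.

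First I would record the key geometric consequence: because the coefficient of $|z_1|^2$ is strictly positive, there are constants $r_0>0$, $A>0$ such that for every sufficiently small $x>0$ the bidisc-type set
$$
\{(w,z'): |w-x|<A x,\ |z_1|<r_0,\ z'' \text{ small}\}
$$
is \emph{not} automatically in $\Om$ — on the contrary, moving in the $z_1$-direction helps us stay inside $\Om$. The precise statement I want is: there exists $c_0>0$ such that the analytic disc
$$
\vphi_x(\zeta) = q_x + \bigl(\text{(second order correction in }\zeta\text{)},\ c_0\sqrt{x}\,\zeta,\ 0,\dots,0\bigr)
$$
maps $\D$ into $\Om$, where the first component is a small quadratic polynomial in $\zeta$ chosen to absorb the $\lambda|z_1|^2$ term (this is the standard "attaching a disc tangent to the complex direction of positive curvature" construction). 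Since $\vphi_x(0)=q_x$ and $\vphi_x'(0)$ has $z_1$-component $c_0\sqrt{x}$, the definition of $\kappa_\Om$ gives $\kappa_\Om(q_x; e_1)\le (c_0\sqrt{x})^{-1}$ — wait, that goes the wrong way; rather, scaling the disc shows $\kappa_\Om(q_x;X)$ is \emph{small}, not large. So the construction must instead be run in reverse: I would use the positive eigenvalue to show that $\Om$ is, near $q$ and in the $z_1$-direction, "thin like $\sqrt{\delta_\Om}$" — i.e. find an upper bound for the size of discs, via a barrier/extremal-length or a one-variable comparison argument.

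The right mechanism is a localization/comparison with a model domain. I would argue that, after shrinking the neighborhood, $\Om\cap V$ is contained in a domain of the form $G = \{-\Re w + \frac{\lambda}{2}|z_1|^2 < 0\}\cap V$ (using $\mathcal C^2$-smoothness to control the $o(|z|^2)$ error), and that $\kappa$ is monotone under inclusion, so $\kappa_\Om(q_x;X)\ge \kappa_G(q_x;X)$ up to a localization correction (here I would invoke a standard localization lemma for the Kobayashi metric at a boundary point, using that $G$ is taut or that one can add a bumping term). Then a direct computation on the model $G$: the slice of $G$ through $q_x$ in the $(w,z_1)$-plane is biholomorphic, via $z_1\mapsto z_1$, $w\mapsto w$, to a neighborhood of $(x,0)$ in $\{\Re w > \frac{\lambda}{2}|z_1|^2\}$, and fixing $w$ near $x$ forces $|z_1|\lesssim \sqrt{x}$; hence the extremal disc for $X=e_1$ has derivative $O(\sqrt x)$, giving $\kappa_G(q_x;X)\ge c\,x^{-1/2}\to\infty$.

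The main obstacle I anticipate is the localization step: passing from the genuine domain $\Om$ (only $\mathcal C^2$, not pseudoconvex, not bounded by the model) to the model domain $G$ without losing the blow-up rate. The inclusion $\Om\cap V\subset G$ only holds after subtracting a definite amount from $\lambda$ to swallow the error term, which is fine, but the Kobayashi metric is not monotone in a way that survives intersecting with $V$; one needs that discs through $q_x$ with large derivative cannot "escape" through $\partial V$. I would handle this by the usual device of composing with a bounded holomorphic function (a peak-type function or just a coordinate) that is small on a smaller ball and using the Schwarz lemma, or by citing a localization principle for $\kappa$ at $\mathcal C^2$ boundary points. Once localization is in hand, the model computation is routine one-variable analysis (the disc $\zeta\mapsto$ (fixed $w$, linear in $z_1$) cannot have too large a $z_1$-derivative because its image must avoid the paraboloid), and the conclusion $\limsup_{x\to0^+}\kappa_\Om(q_x;X)=\infty$ follows.
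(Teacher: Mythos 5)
Your overall strategy --- bounding $\kappa_\Om(q_x;X)$ from below by comparing $\Om$ near $q$ with a model in which motion in the $X$-direction is pinched to width $\sqrt{\delta_\Om}$ --- is the right one, and your self-correction away from the disc-attaching construction (which only yields upper bounds) is sound. But the comparison you propose has a genuine gap: the inclusion $\Om\cap V\subset\{-\Re w+\tfrac{\lambda}{2}|z_1|^2<0\}\cap V$ is false in general. The second-order part of $\rho$ is not $\lambda|z_1|^2$ plus an error absorbable by the positive eigenvalue; it also contains quadratic terms in $\Im w$ and in the remaining tangential variables (where the Levi form may be strictly negative, since Proposition \ref{tang} assumes no pseudoconvexity), plus cross terms. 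For instance, if $\rho=-\Re w-\mu|z_2|^2+\lambda|z_1|^2+o(|z|^2)$ with $\mu>0$, the points $(0,0,\eps,0,\dots)$ lie in $\Om$ but not in your paraboloid region. A correct Taylor comparison forces the larger model $\{\Re z_1+\tfrac12|z_2|^2-C((\Im z_1)^2+|z'|^2)<0\}$, and on that model your ``routine one-variable analysis'' collapses: the real part of the normal component of a competing disc is no longer a positive harmonic function, so Harnack no longer pins the tangential component at $O(\sqrt{x})$, and the sharp rate $x^{-1/2}$ is not accessible by this route at a general $\mathcal C^2$ (non-pseudoconvex) point.

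The paper's proof confronts exactly this. It keeps the negative terms in the comparison function, takes an arbitrary competing disc $\varphi$ with $\varphi(0)=q_x$ and tangential derivative $\lambda$, expands $\rho\circ\varphi(\zeta)$, chooses the argument of $\zeta=re^{i\theta}$ so that the pluriharmonic second-order term coming from the normal component of $\varphi$ is nonnegative, observes that all remaining harmful terms are $O(r^3)$ (the Taylor coefficients of $\varphi$ being bounded because $\Om$ is bounded), and then takes $r=\delta^{1/3}$ to get $|\lambda|\lesssim\delta^{1/6}$, i.e.\ $\kappa_\Om(q_x;X)\gtrsim\delta^{-1/6}$ --- weaker than $\delta^{-1/2}$, but all that is needed for $\limsup=\infty$. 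Note also that this device disposes of your localization worry for free: since the coefficients of $\varphi$ are controlled by $\operatorname{diam}\Om$, the disc stays within $O(r)$ of $q_x$ for $|\zeta|\le r$, so only the local lower bound for $\rho$ is ever invoked and no peak function or localization lemma for $\kappa$ is required.
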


In the case where all the eigenvalues are positive, stronger growth estimates
are known, see e.g. \cite[Chapter III, Theorem 3.1.1]{Fu1}.

\begin{proof}
Using a translation, a rotation, and the implicit function theorem, we may
always assume that $q=0$, and that $\Om=\{\rho<0\}$,
where near $0$,
$$\rho(z) = \Re z_1 + O((\Im z_1)^2+|z'|^2),
\mbox{ where } z'=(z_2, \dots,z_n).
$$
A further rotation lets us assume that the positive eigenvalue is in the $z_2$-direction,
and a dilation that it is equal to $1$.
Therefore
$$\rho(z) = \Re z_1 + |z_2|^2 + O((\Im z_1)^2+ |z_2| |z'| + |z'|^2),
\mbox{ where } z'=(z_3, \dots,z_n).
$$
Then
$$
\rho(z) \ge \Re z_1 + \frac12|z_2|^2 -C ((\Im z_1)^2+ |z'|^2),
$$
and since passing to a smaller defining function, thus to a larger domain,
can only decrease the Kobayashi metric, we may assume that $\rho$ has
this expression. At the cost of further dilations in $z_1, z_2$ and $z'$, we finally
reduce ourselves to
$$
\rho(z) = \Re z_1 + |z_2|^2 - ((\Im z_1)^2+ |z'|^2).
$$
We estimate $\kappa_\Om (z_\delta;X)$
where $z_\delta =(-\delta,0,\dots,0)$ ($\delta>0$ small enough) and $X=(0,1,0,\dots,0)$.
Let $\varphi$ be a holomorphic map from $\D$ to $\Om$ such that
$\varphi(0)=z_\delta$ and $\varphi'(0)=X$.
We will use with no further mention the fact that
the Taylor coefficients of $\varphi$ are bounded since $\Om$ is.
We have
$$
\varphi(\zeta) = (-\delta,\lambda \zeta,0,\dots,0) + \zeta^2 \psi(\zeta),
$$
where $\psi(\zeta)=(\psi_1(\zeta), \dots,\psi_n(\zeta))$ is bounded
for $|\zeta|\le \frac12$ (say).  Denote $\tilde \psi (\zeta)= \frac1\zeta
\left( \psi_1 (\zeta)-\psi_1(0) \right)$, which is also bounded.
Then
\begin{multline*}
\rho \circ \varphi(\zeta) = -\delta +\Re (\psi_1(0) \zeta^2)
+ \Re (\tilde \psi_1 (\zeta) \zeta^3)
\\
+|\lambda|^2 |\zeta|^2
\left| 1+\zeta \psi_2(\zeta)\right|^2 - (\Im (\psi_1(\zeta)  \zeta^2))^2
-|\zeta|^4 \sum_{j=3}^n |\psi_j(\zeta)|^2.
\end{multline*}

Choose $\zeta = re^{i\theta}$ with $\Re (\psi_1(0)e^{2i\theta} )\ge 0$.
Then since $\varphi(\D)\subset \Om$, we have for $0<r<1$,
$$
0> -\delta +|\lambda|^2 r^2 -C_1  r^3 -C_2  r^4
\ge -\delta +|\lambda|^2 r^2 -C_3  r^3.
$$
Choose $r=\delta^{1/3}$, we find $|\lambda| \le (1+C_3)^{1/2} \delta^{1/6}$, which means
that $\kappa_\Om (z_\delta;X) \gtrsim \delta^{-1/6}$,
and therefore goes to infinity as $\delta$ goes to $0$.
\end{proof}

\section{Proof of the sufficiency in Theorem \ref{bergman}}
\label{sufberg}

We prove in this section that each of the conditions \eqref{kernel} or \eqref{metric} implies
that $\partial\Omega$ admits a Levi flat portion in a neighborhood of $p$ by
proving that if $\partial\Omega$ is not Levi flat in any neighborhood of $p$,
then those estimates must fail.

Suppose  $\partial\Omega$ is not Levi flat in $U$. Recall that the
Levi form $\mathcal L\rho (q)$ is a semidefinite positive
Hermitian form on $T_q^{\C}\partial \Omega \simeq \C^{n-1}$, and
that if its rank $l_\Omega(q)$ is equal to $l$ and
$\C^{n-1}=T_1\oplus T_2$ with $\dim T_1=l$, $\mathcal L\rho
(q)|_{T_1}$ is definite positive, and $\mathcal L\rho
(q)|_{T_2}\equiv 0$.

Let now $k=\max_{q \in U\cap
\partial\Omega}l_{\Omega}(q).$ Since the rank is a lower semicontinuous function,
there exists a non empty open set $V_1$ such that $k=l_\Omega(q)$
for any $q \in V_1$. We choose such a $q$, and take coordinates so
that $q$ becomes the origin and $\rho(z)= \Re z_1 + \rho_2(z)$
with $\rho_2(z)=O(\|z\|^2)$, thus
$T_q^{\C}\partial\Omega=\{0\}\times \C^{n-1}$.

Furthermore we choose coordinates on $\C^{n-1}$ such that
$\mathcal L\rho (q)|_{\{0\}\times \C^{n-1-k}}$ $\equiv 0$ and
$\mathcal L\rho (q)|_{\C^{k}\times\{0\}}$ is definite positive.
This latter property is stable, more precisely there is a ball
about the origin $V_2\subset V_1$ such that
$(\partial\Omega)\cap(\C^{k+1}\times\{0\})\cap V_2$ is a strict
pseudoconvex boundary in $\C^{k+1}$. Let $\Omega'= \{ z' \in
\C^{k+1}: (z',0) \in \Omega\}$ (we will use the ${}'$ notation
freely to denote the first $k+1$ coordinates in what follows). It
is a pseudoconvex domain, a smoothing of $V_2 \cap \Omega'$ will
be strictly pseudoconvex, and there exists $V_3 \subset V_2$ such
that for $z=(z',0)\in (\Omega'\times \{0\}) \cap V_3$, then
$\delta_{\Omega'}(z') \asymp \delta_{\Omega}(z)$.

We now recall briefly how the Bergman kernel is estimated in
strictly pseudoconvex domains. Given $z'\in \Omega'$, there exists
a polydisk $P'_{z'} \Subset \Omega'$ with radii
$c\delta_{\Omega'}(z') $ in the complex normal direction and
$c\delta_{\Omega'}^{1/2}(z')$ in the complex tangential
directions. It follows by \cite[Theorem 3.5.1]{Hor} (see also
\cite{Die}) that
\begin{equation}
\label{spcest} k_{\Omega'}(z') \asymp
\delta_{\Omega'}(z')^{-1-k/2} \asymp \lambda_{2k+2}
(P'_{z'})^{-1/2},
\end{equation}
where $\lambda_m$ stands for the Lebesgue measure in real dimension $m$.

\begin{lemma}
\label{kernslice} There exist concentric balls about $q$,
$V_5\Subset V_4$ such that for any $z'\in V'_5$,
$$
k_{\Omega' \cap V'_4} (z')\asymp k_{\Omega \cap V_4} (z',0).
$$
By the localization property of the Bergman kernel
\cite[Proposition 1]{DFH},
$$
k_{\Omega'}(z')\asymp k_\Omega(z',0).
$$

\end{lemma}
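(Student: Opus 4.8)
The plan is to establish the first displayed comparison, $k_{\Omega'\cap V_4'}(z')\asymp k_{\Omega\cap V_4}(z',0)$ for $z'\in V_5'$; the second, global, statement then follows by applying the localization property of the Bergman kernel \cite{DFH} twice --- to the pseudoconvex domain $\Omega$ at $q$, which gives $k_\Omega(z',0)\asymp k_{\Omega\cap V_4}(z',0)$, and to the pseudoconvex domain $\Omega'$ at the corresponding boundary point, which gives $k_{\Omega'}(z')\asymp k_{\Omega'\cap V_4'}(z')$ --- and chaining the three relations. For the localized comparison I would first shrink $V_4$ so that the Levi rank equals $k$ identically on $V_4\cap\partial\Omega$ and so that the strictly pseudoconvex estimate \eqref{spcest} is available; by \eqref{spcest} and \cite{DFH} we may then use throughout that $k_{\Omega'\cap V_4'}(z')\asymp\delta_{\Omega'}(z')^{-1-k/2}\asymp\lambda_{2k+2}(P'_{z'})^{-1/2}$. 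It remains to prove two matching one-sided estimates.

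For $k_{\Omega\cap V_4}(z',0)\gtrsim k_{\Omega'\cap V_4'}(z')$ I would invoke the Ohsawa--Takegoshi extension theorem. Since $V_4$ is a ball, $\Omega\cap V_4$ is bounded and pseudoconvex; the coordinate subspace $\{w''=0\}$ (with $w''$ the last $n-1-k$ coordinates) is linear, and its trace on $\Omega\cap V_4$ is exactly $\Omega'\cap V_4'$. Let $h_{z'}$ be the Bergman extremal function of $\Omega'\cap V_4'$ at $z'$, normalized so that $\|h_{z'}\|_{L^2(\Omega'\cap V_4')}=1$ and $h_{z'}(z')=k_{\Omega'\cap V_4'}(z')$. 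Extending it to $F\in\OO(\Omega\cap V_4)$ with $F|_{\{w''=0\}}=h_{z'}$ and $\|F\|_{L^2(\Omega\cap V_4)}^2\le C_0$, where $C_0$ depends only on the diameter of $V_4$ and not on $z'$, we obtain $k_{\Omega\cap V_4}(z',0)\ge|F(z',0)|\,\|F\|_{L^2(\Omega\cap V_4)}^{-1}\ge C_0^{-1/2}k_{\Omega'\cap V_4'}(z')$.

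For the reverse estimate I would use domain monotonicity of the Bergman kernel: for any polydisk $D$ centered at $(z',0)$ and contained in $\Omega\cap V_4$ one has $k_{\Omega\cap V_4}(z',0)\le k_D(z',0)=\lambda_{2n}(D)^{-1/2}$, so it suffices to produce such a $D$ with $\lambda_{2n}(D)\gtrsim\delta_{\Omega'}(z')^{k+2}$, which then yields $k_{\Omega\cap V_4}(z',0)\lesssim\delta_{\Omega'}(z')^{-1-k/2}\asymp k_{\Omega'\cap V_4'}(z')$. The natural candidate is $D=P'_{z'}\times Q''$, where $P'_{z'}$ is the strictly pseudoconvex polydisk of the first $k+1$ variables (radii $\asymp\delta_{\Omega'}(z')$ in the complex normal direction, $\asymp\delta_{\Omega'}(z')^{1/2}$ in the $k$ complex tangential ones, re-centered at $z'$) and $Q''$ is a polydisk of fixed radius in the $w''$ directions; then $\lambda_{2n}(D)=\lambda_{2k+2}(P'_{z'})\,\lambda_{2n-2k-2}(Q'')\asymp\delta_{\Omega'}(z')^{k+2}$, as needed, provided $D\subset\Omega\cap V_4$. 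Proving this inclusion is the crux, and the only place where the constant-rank hypothesis enters: near $q$ the boundary $\partial\Omega$ is foliated by complex $(n-1-k)$-dimensional manifolds (the kernel of the Levi form is an integrable distribution of constant rank), and using the normal form of the coordinates afforded by the $\mathcal C^2$ hypotheses of \cite{Krt} (cf. the Remark in the introduction) one straightens these leaves to the planes $\{z'=\mathrm{const}\}$. After this change of coordinates $\Omega\cap V_4$ is squeezed, for a further shrinking of $V_4$, between two products of the form (strictly pseudoconvex domain in $\C^{k+1}$) $\times\,Q''$ whose boundary-distance functions are mutually comparable; monotonicity against the inner product yields the required $D_{z'}$, and monotonicity against the outer one re-proves the lower bound, so in fact both estimates can be obtained this way (and if the straightening can be realized biholomorphically, $\Omega\cap V_4$ is an honest local product and the comparison is immediate). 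I expect the main obstacle to be precisely this step --- controlling the geometry of $\Omega$ in the Levi-null directions, equivalently the transverse regularity of the Levi foliation, under only $\mathcal C^2$ boundary regularity --- which is where the sharp results of \cite{Krt} are indispensable.
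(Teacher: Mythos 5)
Your lower bound $k_{\Omega\cap V_4}(z',0)\succeq k_{\Omega'\cap V_4'}(z')$ via Ohsawa--Takegoshi, and the reduction of the second display to the first by localizing both $\Omega$ at $q$ and $\Omega'$ at $q'$, coincide with the paper's argument. The gap is in the converse inequality. The inclusion $P'_{z'}\times Q''\subset\Omega\cap V_4$ with $Q''$ of \emph{fixed} radius is false in general, and no holomorphic change of coordinates can rescue it: the Sommer--Kraut foliation has leaves that are complex manifolds, but the dependence of the leaves on the transverse parameter is only continuous (at best as regular as $\rho$), not holomorphic, so the foliation cannot be straightened biholomorphically to the planes $\{z'=\mathrm{const}\}$, and $\Omega$ is not locally squeezed between products with comparable distance functions. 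Concretely, the leaf through a point $(\xi,0)$ is a graph $w'=\xi+h_\xi(w'')$ with $h_\xi(0)=0$ and $|h_\xi(w'')|=O(\eps|w''|+|w''|^2)$; over a transverse polydisk $Q''$ of fixed radius $r_0$ this displacement is of size comparable to $r_0$, i.e.\ bounded below independently of $\delta_{\Omega'}(z')$, whereas the radii of $P'_{z'}$ are $\asymp\delta_{\Omega'}(z')$ and $\asymp\delta_{\Omega'}(z')^{1/2}$, both tending to $0$. Already for $n=2$, $k=0$, the slice $\Omega\cap(\C\times\{w''\})$ for $|w''|\asymp r_0$ is the slice $\Omega'$ shifted by $O(r_0)$, so it need not even contain the point $z'$, let alone the disk $P'_{z'}$. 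Shrinking $Q''$ to radius $\delta^{1/2}$ does not help either: it destroys the volume count ($\lambda_{2n}(D)$ would pick up an extra factor $\delta^{n-1-k}$) and still does not guarantee the inclusion, since the leaf displacement $O(\eps\delta^{1/2})$ can exceed the normal radius $\asymp\delta$ of $P'_{z'}$.

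The paper's proof gets around exactly this obstruction by abandoning the product polydisk in favor of the twisted tube $\tilde P_{(z',0)}=\bigcup\{\Phi_\xi(\B^{n-1-k}):\xi\in P'_{z'}\}$, which \emph{is} contained in $\Omega\cap V_4$ because $\rho$ is constant on each leaf $F_\xi$ and $\rho(\xi,0)<0$. Since $\tilde P$ is not a polydisk, monotonicity of the Bergman kernel is replaced by a two-step mean-value argument: for $f$ in the unit ball of $L^2(\Omega\cap V_4)\cap\OO(\Omega\cap V_4)$, the pullback $f\circ\Phi_\xi$ is holomorphic on a neighborhood of $\bar\B^{n-1-k}$, so sub-averaging along each leaf gives $|f(\xi,0)|^2\preceq\int_{\B^{n-1-k}}|f\circ\Phi_\xi|^2$; integrating in $\xi$ over $P'_{z'}$ and changing variables (the transversality and the uniform bounds on $D\Phi_\xi$ keep the Jacobian under control) bounds $\int_{P'_{z'}}|f(\xi,0)|^2\,d\lambda_{2k+2}(\xi)$ by $c_2\|f\|^2_{L^2(\Omega\cap V_4)}$, and then the ordinary mean-value inequality over $P'_{z'}$ yields $|f(z',0)|\preceq\lambda_{2k+2}(P'_{z'})^{-1/2}\asymp k_{\Omega'\cap V_4'}(z')$. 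Note that the transverse direction is handled purely by integration and requires no holomorphic (or even flat) structure across the leaves; only the holomorphy of $f$ \emph{along} each leaf is used. You would need to replace your polydisk-inclusion step by an argument of this kind.
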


Assuming Lemma \ref{kernslice}, we prove that \eqref{kernel} must
fail. Given any neighborhood $U$ of $p$, there is $q\in U$ to
which we can apply the Lemma and for $z=q+xn_q$, $x>0$,
\eqref{spcest} implies that
\begin{equation}
\label{berggrowth}
\delta_\Omega (z)k_\Omega (z) \asymp x^{-k/2}\to \infty
\mbox{ as }x\to0.
\end{equation}

We turn to the failure of \eqref{metric}. For a given $p$ and any neighborhood $U$,
we choose $q\in U$ as at the beginning of this section. Then there exists
a vector $X \in T_q^{\C}\partial\Omega$ such that
$$
\liminf_{x\to 0^+} x^{1/2} \beta_{\Omega} (q+xn_q) >0.
$$
Indeed, with the coordinates chosen above, let $X=(X',0)$ where
$\|X'\|=1$ and $X' \in \{0\}\times \C^k$.  Since $X'$ is a
complex-tangential vector for the strictly pseudoconvex domain
$\Omega \cap V_4$, it is known that $\beta_{\Omega'} (z',X')
\asymp \delta_{\Omega'}(z')^{-1/2}\|X'\|$ (see \cite{Die}).

The Ohsawa--Takegoshi extension theorem \cite{OT} applied to the
linear subspace $\C^{k+1}$ and the domain $\Omega$, implies that
$m_{\Omega'}(z',X') \preceq m_{\Omega}((z',0),\\(X',0))$.  In our
new coordinates, $z=q+xn_q=(-x,0,\dots,0)$, $\delta_\Omega(z)=
\delta_{\Omega'}(z')=x$. Then
\begin{multline*}
\beta_\Omega (z,X) = \frac{m_\Omega (z,X) }{k_\Omega (z) }
\succeq  \frac{m_{\Omega'} (z',X') }{k_{\Omega'} (z') } \\
= \beta_{\Omega'} (z',X') \asymp \delta_{\Omega'}(z')^{-1/2} = \delta_\Omega(z)^{-1/2}.
\end{multline*}

\begin{proof*}{\it Proof of Lemma \ref{kernslice}.}
The inequality $k_{\Omega' \cap V'_4} (z')\preceq k_{\Omega \cap
V_4} (z',0)$ follows easily from the Ohsawa--Takegoshi extension
theorem, as above.

To prove the converse inequality, we first invoke a result of
Sommer \cite{Som} and Kraut \cite{Krt} that yields a foliation of
a full neighborhood of $q$ by a $2k+2$-parameter family of complex
manifolds $F_\theta$ of complex dimension $n-1-k$ such that
$\rho|_{F_\theta}$ is constant for each $\theta$. Some comments
are in order : Sommer proved the theorem only in the case where
$\rho$ is $\mathcal C^4$-smooth, Kraut gave a new proof that is valid if
$\rho$ is $\mathcal C^2$-smooth. Both of them state the result only as a
foliation of the hypersurface $\{\rho=0\}$, but prove it for a
whole neighborhood (and in fact need this in order to carry out
their proofs). See \cite[p. 310]{Krt}: ``Da diese
charakteristischen Mannigfaltigkeiten notwending auf den Fl\"achen
$f=$ const. verlaufen, bl\"attern sie also diese
komplex-analytisch."

We need to have holomorphic parametrizations of the leaves depending continuously on $\theta$.
The leaves are obtained as integral manifolds of an integrable distribution
of vector fields, whose value at each $q\in U$ are vectors in the kernel of
$\mathcal L\rho (q)$.  Since $\rho$ is $\mathcal C^2$-smooth, those
vectors can be chosen to depend continuously on $q$.  The exponential maps
of those vector fields are analytic (because the vector fields are holomorphic
vector fields), and so not only their values, but also their derivatives with
respect to the parameters of each leaf, depend continuously on $\theta$.

Denote by $V_6$ a ball about $q$ small enough so that the kernel of $\mathcal L\rho(\zeta)$
remains transverse to $\C^{k+1}\times\{0\}$ with a uniformly bounded
angle for $\zeta \in V_6$, and therefore
so does $F_\theta \cap V_6$ for each relevant $\theta$. We then may parametrize
those manifolds by $F_\theta \cap \C^{k+1}\times\{0\} = \{(\theta,0)\}$,
provided $V_6$ is chosen small enough.

Since the leaves depend continuously on $\theta$, we can chose holomorphic maps
$\Phi_\theta : G_\theta \longrightarrow F_\theta \subset \C^n,$
where $G_\theta$
is a neighborhood of the origin in $\C^{n-1-k}$ such that the unit ball $\B^{n-1-k}
\subset G_\theta$, for any $\theta \in V'_4$, where $V_4 \subset V_6$
is a ball about the origin; and
furthermore, reducing $V_4$ as needed,
for $\theta \in V_4$, $\Phi_\theta (G_\theta) \subset V_6$ and
the Gram determinant of the image of
the standard basis of $\C^{n-1-k}$ by $D \Phi_\theta$ is bounded above and below
uniformly in $\theta \in V'_4$. For some ball $V_5 \Subset V_4$, we may also assume
that $P'_{z'} \subset V_4$ for any $z'\in V'_5$.

For any $f\in L^2(\Om\cap V_4)\cap\mathcal O(\Om\cap V_4)$, $z'\in V'_4$, then
$f\circ \Phi_{z'}$ is holomorphic in a neighborhood of $\bar {\B}^{n-1-k}$, so
$$
|f(z')|^2 = |f\circ \Phi_{z'} (0)|^2 \le
c_1 \int_{\B^{n-1-k}} |f\circ \Phi_{z'} (\zeta)|^2 \, d\lambda_{2(n-1-k)}(\zeta),
$$
where $c_1$ depends only on the dimension. For $z'\in V'_5$,
\begin{multline*}
\int_{P'_{z'}} |f(\xi)|^2 \,  d\lambda_{2(k+1)}(\xi)
\\
\le c_1 \int_{P'_{z'}} \int_{\B^{n-1-k}} |f\circ \Phi_\xi (\zeta)|^2 \,
d\lambda_{2(n-1-k)}(\zeta) \,  d\lambda_{2(k+1)}(\xi)
\\
\le
c_2 \int_{\tilde P_{(z',0)}} |f(\zeta)|^2 d\lambda_{2n}(\zeta)
\le c_2 \int_{\Om \cap V_4} |f(\zeta)|^2 d\lambda_{2n}(\zeta),
\end{multline*}
where $\tilde P_{(z',0)} = \bigcup \{  \Phi_{\xi} ( \B^{n-1-k}) : \xi \in P'_{z'} \}$,
and the requirements about transversality and the differential of $\Phi_\xi$
ensure that the Jacobian determinant involved in the change of variables remains
bounded.

Now if we are given a function $f$ in the unit ball of $L^2(\Om\cap V_4)\cap\mathcal O(\Om\cap V_4)$,
applying the mean value inequality as in \eqref{spcest} implies that
$|f(z',0)| \preceq \lambda_{2k+2} (P'_{z'})^{-1/2} \asymp k_{\Om'\cap V'_4}(z').$
\end{proof*}

\section{The rank of the Levi form}
\label{ranklevi}

Observe that \eqref{berggrowth} proves a bit more than the failure of
\eqref{kernel} when the boundary is not Levi flat. In particular, it still holds
when $k=0$ (Levi flat case), and gives an estimate of the growth of
the Bergman kernel in terms of the rank of the Levi form. Thus we have the
following corollary of the proof in section \ref{sufberg}.

\begin{proposition}
\label{rankcount}
Let $p$ be a boundary point of a bounded
pseudoconvex domain $\Om\subset\C^n$ such that $\partial \Omega$
is $\mathcal C^2$-smooth near $p$.

If there exist a neighborhood $U$ of $p$ and  constants $c>1$, $m>0$
such that for any $z\in\Om\cap U,$
\begin{equation}\label{est}
c^{-1}<k_\Om(z)\delta_\Om^m(z)<c,
\end{equation}
then $2(m-1) \in \N\cup\{0\}$ and there exists a non empty open
set $U_1 \subset U$ such that the Levi form of $\partial \Omega$
has constant rank $2(m-1)$ in $U_1$.

Conversely, if there exists a neighborhood $U$ of $p$ such that
the Levi form of $\partial\Omega$ has constant rank $2(m-1)$ in
$U$, then there is a neighborhood $U_1\subset U$ of $p$ such that
such that \eqref{est} holds for any $z\in\Om\cap U_1.$

As a consequence, denoting by $l_{\Omega}(p)$ the rank
of the Levi form of $\partial \Omega$ at $p$,
$$
\limsup_{\partial\Omega\ni q\to p} l_{\Omega}(q) =
 2 \limsup_{\Omega\ni z\to p} \frac{\log k_\Om(z)}{\log
1/\delta_\Om(z)} -1.
$$
\end{proposition}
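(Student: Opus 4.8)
The plan is to mimic the sufficiency argument of Section \ref{sufberg}, upgrading the Levi-flat dichotomy ($k=0$ versus $k\ge 1$) to a quantitative statement about the local maximal rank $k=\max_{q\in U\cap\partial\Omega}l_\Omega(q)$. First I would establish the forward implication. Suppose \eqref{est} holds on $\Om\cap U$. Fix a point $q\in U\cap\partial\Omega$ attaining the maximal rank $k$; by lower semicontinuity of the rank there is an open set $V_1\subset U$ on which $l_\Omega\equiv k$. Choosing coordinates exactly as in Section \ref{sufberg} (so $\rho(z)=\Re z_1+\rho_2(z)$, with the positive part of the Levi form in the $\C^k$-directions and the kernel in the remaining $\C^{n-1-k}$-directions), I would run the estimate \eqref{berggrowth} verbatim: slicing by $\C^{k+1}\times\{0\}$ gives a strictly pseudoconvex domain $\Omega'\subset\C^{k+1}$ whose Bergman kernel satisfies $k_{\Omega'}(z')\asymp\delta_{\Omega'}(z')^{-1-k/2}$ by \eqref{spcest}, and Lemma \ref{kernslice} transfers this to $k_\Omega(z',0)\asymp\delta_\Omega(z',0)^{-1-k/2}$ along the normal segment $z=q+xn_q$. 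Comparing with \eqref{est} forces $m=1+k/2$, hence $2(m-1)=k\in\N\cup\{0\}$, and moreover the rank must in fact be \emph{constant} equal to $k$ on some open $U_1$: indeed, by lower semicontinuity the set where $l_\Omega=k$ is open, and if the rank dropped below $k$ on some boundary point arbitrarily close, the same slicing argument (now with a smaller $k'<k$) applied near that point would give $k_\Omega\asymp\delta_\Omega^{-1-k'/2}$ there, contradicting \eqref{est} with the fixed exponent $m$. So the maximal rank is also the minimal rank near such points, i.e. the rank is locally constant $=2(m-1)$ on a neighborhood $U_1$.

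For the converse, assume the Levi form has constant rank $2(m-1)=k$ throughout a neighborhood $U$ of $p$. Then the slicing construction of Section \ref{sufberg} applies at \emph{every} $q\in U\cap\partial\Omega$ (not just at one chosen point), and Lemma \ref{kernslice} together with \eqref{spcest} gives, uniformly, $k_\Omega(z)\delta_\Omega^{1+k/2}(z)\asymp 1$ for $z$ near $p$; this is precisely \eqref{est} with $m=1+k/2$ on a suitable $U_1\subset U$. The one point requiring care is uniformity of the constants in the asymptotic as the base point $q$ varies over a neighborhood of $p$: the polydisk radii, the transversality angle $V_6$, and the Gram-determinant bounds in the proof of Lemma \ref{kernslice} all depend continuously on $q$ because the Levi form has constant rank (so the kernel distribution is a genuine $\mathcal C^{0}$ subbundle), hence they can be bounded uniformly on a compact neighborhood of $p$; this gives the two-sided estimate \eqref{est} with a single constant $c$.

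Finally, the $\limsup$ formula follows by combining the two halves with a localization in $U$. For the inequality ``$\le$'': if $l_\Omega(q)$ comes arbitrarily close to some value $k_0$ as $q\to p$, then in every neighborhood $U$ of $p$ the maximal rank is $\ge k_0$ for $U$ small, so by the forward slicing estimate there are points $z\to p$ with $k_\Omega(z)\succeq\delta_\Omega(z)^{-1-k_0/2}$, whence $\limsup_{z\to p}\log k_\Omega(z)/\log(1/\delta_\Omega(z))\ge 1+k_0/2$. For the reverse inequality ``$\ge$'': on a neighborhood $U$ of $p$ write $k^\ast=\max_{U\cap\partial\Omega}l_\Omega$; by the argument of Lemma \ref{kernslice} (applied with $k^\ast$ in place of $k$ — note that $\B^{n-1-k}\subset G_\theta$ is exactly what gives the mean-value upper bound) one gets $k_\Omega(z)\preceq\delta_\Omega(z)^{-1-k^\ast/2}$ for all $z\in\Om\cap U_1$, so $\limsup_{z\to p}\log k_\Omega(z)/\log(1/\delta_\Omega(z))\le 1+k^\ast/2$, and letting $U$ shrink to $p$ makes $k^\ast\downarrow\limsup_{q\to p}l_\Omega(q)$. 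Rearranging yields the stated identity.

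The main obstacle I anticipate is not any single estimate — each is a rerun of Section \ref{sufberg} — but rather the bookkeeping of \emph{uniformity}: in the original proof the base point $q$ is fixed once and for all, whereas here one must let $q$ range over a neighborhood and extract constants that do not degenerate. The constant-rank hypothesis is exactly what rescues this (it makes the leaf parametrizations $\Phi_\theta$ and their Gram determinants depend continuously, hence locally uniformly, on the base point), but checking that the upper-bound half of Lemma \ref{kernslice} also survives with uniform constants — in particular that the polydisks $P'_{z'}$ and the tubes $\tilde P_{(z',0)}$ stay inside a fixed $V_4$ — is the step that needs to be written out with some attention.
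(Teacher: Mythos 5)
Your overall strategy --- rerunning the slicing argument of Section \ref{sufberg} with the local maximal rank $k$ in place of the dichotomy $k=0$ versus $k\ge 1$ --- is exactly how the paper obtains Proposition \ref{rankcount} (it is stated there as a corollary of that proof), and your converse direction, including the remarks on uniformity of the constants under the constant-rank hypothesis, matches the intended argument. But there is one genuinely invalid step in your forward direction. You claim that if the rank dropped to some $k'<k$ at a boundary point, ``the same slicing argument (now with a smaller $k'$) applied near that point would give $k_\Omega\asymp\delta_\Omega^{-1-k'/2}$ there,'' and you conclude that the rank is locally constant $=2(m-1)$ on a \emph{neighborhood}. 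The two-sided estimate $k_\Omega\asymp\delta_\Omega^{-1-k'/2}$ is not available at such a point: the upper bound in Lemma \ref{kernslice} rests on the Sommer--Kraut foliation by complex manifolds of dimension $n-1-k'$, which exists only where the Levi form has \emph{locally constant} rank $k'$. At a point where the rank is $k'$ but jumps to $k$ arbitrarily nearby, the kernel of the Levi form is not a distribution of constant dimension and no such foliation exists; all one gets there is the one-sided bound $k_\Omega\succeq\delta_\Omega^{-1-k'/2}$, which is perfectly consistent with \eqref{est} for $m=1+k/2>1+k'/2$. This is precisely why the paper claims only the existence of \emph{some} nonempty open $U_1$ of constant rank (namely the open set where the lower semicontinuous rank attains its maximum --- the first half of your sentence, which is all the statement requires), explicitly labels ``constant rank near $p$'' as a conjecture in the $\mathcal C^2$ category, and proves it only under $\mathcal C^\infty$ smoothness via the Catlin multitype (Theorem \ref{smoothrank}). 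Your contradiction argument would, if valid, settle that conjecture.

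The same misapplication reappears in your ``$\ge$'' half of the $\limsup$ identity, where you invoke the upper bound of Lemma \ref{kernslice} ``with $k^\ast$ in place of $k$'' at \emph{all} points of $\Om\cap U_1$: again this is only justified near boundary points where the rank is locally constant (to be fair, the paper itself is terse on this direction of the formula). Two smaller points: deleting the flawed upgrade does not damage the proof of the proposition as stated, since the needed open set is already provided by lower semicontinuity; and your computation correctly yields $\limsup_q l_\Omega(q)=2\bigl(\limsup_z \log k_\Om(z)/\log(1/\delta_\Om(z))-1\bigr)$, which does not literally ``rearrange'' to the displayed identity with its final $-1$ --- the discrepancy appears to be a typo in the statement, but you should not paper over it.
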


More generally, one may conjecture that \eqref{est} implies that
the Levi form of $\partial\Omega$ has constant rank $2(m-1)$ near
$p.$ It is not difficult to see this (by dilatation of the
coordinates) if the rank is maximal (i.e. $n-1$). On the other
hand, \eqref{berggrowth} implies the conjecture when the rank is
minimal (i.e. $0$).

In general, it is difficult to say what happens to the foliation in
complex manifolds near a degeneracy point, where the rank of the Levi form
verifies
$l_\Omega(p)< \limsup_{q\to p, q\neq p} l_\Omega(q)$.  However, in the smooth case,
we may confirm the above conjecture with the aid of the Catlin multitype \cite{Cat}.

\begin{theorem}
\label{smoothrank}
Let $p$ be a boundary point of a bounded
pseudoconvex domain $\Om\subset\C^n$ such that $\partial \Omega$
is $\mathcal C^\infty$-smooth near $p$.
Then if \eqref{est} holds in a neighborhood of $p$,
we have $l_\Omega (p) =2(m-1)$.
\end{theorem}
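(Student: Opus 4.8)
The plan is to combine the already-established bounds of Proposition \ref{rankcount} (applied locally, away from degeneracies) with a precise lower bound for the Bergman kernel at points approaching $p$ along the normal, obtained via the Catlin multitype. First I would note that Proposition \ref{rankcount} already forces $2(m-1) \in \N \cup \{0\}$ and gives an open set $U_1$ on which the Levi form has constant rank $2(m-1)$; by lower semicontinuity of the rank, $2(m-1) = \max_{q \in U \cap \partial\Omega} l_\Omega(q)$. So if $p$ is a point of maximal rank in its neighborhood we are done immediately, and the whole difficulty is the \emph{degenerate} case $l_\Omega(p) < 2(m-1) = \limsup_{q \to p} l_\Omega(q)$. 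I would then show that such a $p$ is incompatible with the upper bound $k_\Omega(z)\delta_\Omega^m(z) < c$ in \eqref{est}, by exhibiting sequences $z_j \to p$ along which $k_\Omega(z_j)$ grows strictly faster than $\delta_\Omega(z_j)^{-m}$.

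The key step is the construction, at the degeneracy point $p$, of a polydisc-like set $Q_{z}$ contained in $\Omega$ with Lebesgue measure $\lambda_{2n}(Q_{z}) \preceq \delta_\Omega(z)^{2m'}$ for some $m' > m$ when $z = p + x n_p$, which by the mean-value inequality (exactly as in \eqref{spcest}) yields $k_\Omega(z) \succeq \lambda_{2n}(Q_z)^{-1/2} \succeq \delta_\Omega(z)^{-m'} \gg \delta_\Omega(z)^{-m}$, contradicting \eqref{est}. To get the correct, necessarily larger, polydisc dimensions I would choose Catlin-multitype coordinates at $p$: in the $\mathcal C^\infty$ pseudoconvex case there are coordinates in which the defining function takes the weighted-homogeneous-plus-higher-order form $\rho(z) = \Re z_1 + P(z_2,\dots,z_n) + (\text{terms of higher weight})$, where $P$ is plurisubharmonic, weighted homogeneous for weights $(1/m_2,\dots,1/m_n)$ with the multitype $(1, m_2, \dots, m_n)$, and the number of indices $j$ with $m_j = 1$ equals $l_\Omega(p)$. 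Degeneracy $l_\Omega(p) < \limsup l_\Omega(q)$ means that at $p$ fewer directions are "strongly pseudoconvex" than generically, i.e. some $m_j \ge 2$, which forces a \emph{bigger} polydisc in those directions (radius $\delta^{1/m_j} \gg \delta^{1/2}$) and hence $\sum_j 1/m_j < 1 + (l_\Omega(p))/2 < 1 + (m-1) = m$; so $m' := \sum_j 1/m_j < m$ — wait, this gives the measure estimate $\lambda_{2n}(Q_z) \asymp \delta^{2m'}$ with $m' < m$, hence $k_\Omega(z) \succeq \delta^{-m'}$, which is \emph{weaker}, not a contradiction. So the contradiction must instead come from the \emph{other} side: near $p$ there are points $q$ with $l_\Omega(q) = 2(m-1) > l_\Omega(p)$, and approaching $p$ through a careful choice of base points (not along the normal at $p$, but exploiting that arbitrarily close to $p$ the kernel behaves like $\delta^{-m}$ while the \emph{multitype is upper semicontinuous in the Catlin order}) one compares the two regimes. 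The clean way is: use the Catlin-coordinate polydisc at $p$ itself to get an \emph{upper} bound $k_\Omega(p + xn_p) \preceq x^{-(\,\sum 1/m_j\,)}$ and a matching lower bound from \eqref{spcest}-type arguments, so that \eqref{est} forces $\sum_j 1/m_j = m$; then since $\sum_j 1/m_j \le 1 + l_\Omega(p)/2$ with equality iff all nonzero $m_j \in \{1,2\}$... and separately $m = 1 + \max_q l_\Omega(q)/2$, one deduces $l_\Omega(p) \ge \max_q l_\Omega(q)$, hence equality.

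The main obstacle I anticipate is the two-sided Bergman kernel estimate in Catlin-multitype coordinates: the lower bound is routine (mean-value inequality on an inscribed polydisc, as already used), but the \emph{upper} bound $k_\Omega(p + xn_p) \preceq x^{-\sum 1/m_j}$ requires either a local plurisubharmonic weight / $\bar\partial$ argument à la Catlin, or the subelliptic-type estimates of \cite{Cat}, and one must check these survive localization (for which one can invoke \cite[Proposition 1]{DFH} as in Lemma \ref{kernslice}). A secondary point to handle carefully is that the multitype is defined intrinsically (it is the smallest weight, in lexicographic order, of a "distinguished" family of curves), so one must argue that the exponent governing $k_\Omega$ along the normal at $p$ is exactly $\sum_j 1/m_j^{(p)}$ where $(1,m_2^{(p)},\dots)$ is the multitype at $p$, and that $\sum_j 1/m_j^{(p)}$ is maximized, as $p$ ranges over $U \cap \partial\Omega$, precisely at the points of maximal Levi rank — equivalently, that $\sum_j 1/m_j$ is upper semicontinuous and its maximum value $m$ is attained on an open dense set where it equals $1 + k/2$. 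Once these facts about the multitype are in place, the theorem follows by the numerical comparison above.
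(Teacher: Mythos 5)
Your skeleton is the right one --- reduce to the degenerate case $l_\Omega(p) < k := \limsup_{q\to p} l_\Omega(q) = 2(m-1)$, invoke upper semicontinuity of the Catlin multitype to show that at $p$ at least one tangential entry satisfies $m_{k+1} > 2$, inscribe a correspondingly larger polydisk in $\Omega$ at $p + x n_p$, and compare with \eqref{est} --- and this is exactly what the paper does (Lemma \ref{rhoest} produces the polydisk with radii $cx$, $cx^{1/2}$, \dots, $cx^{1/3}$, $cx^{1/r}$). But your argument derails on a direction error in the mean value inequality. For an inscribed polydisk $Q_z \Subset \Omega$ centered at $z$, the sub-mean-value property of $|f|^2$ gives $|f(z)|^2 \le \lambda_{2n}(Q_z)^{-1}\|f\|_{L^2(\Omega)}^2$, hence $k_\Omega(z) \preceq \lambda_{2n}(Q_z)^{-1/2}$: an \emph{upper} bound on the kernel, not the lower bound $k_\Omega \succeq \lambda_{2n}(Q_z)^{-1/2}$ you wrote (the latter is the half of \eqref{spcest} that needs H\"ormander's theorem and strict pseudoconvexity). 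With the direction corrected, the step you abandoned mid-paragraph (``wait, \dots not a contradiction'') is precisely the proof: the larger inscribed polydisk at the degenerate point gives $k_\Omega(p + xn_p) \preceq x^{-m'}$ with $m' < m$, and this contradicts the \emph{lower} bound $k_\Omega(z)\delta_\Omega^m(z) > c^{-1}$ in \eqref{est}. No lower bound on $k_\Omega$ near $p$ is needed anywhere.

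Because of that reversal, your fallback ``clean way'' asks for the two-sided estimate $k_\Omega(p+xn_p) \asymp x^{-\sum_j 1/m_j}$ at the degeneracy point. Its lower-bound half (which you mislabel as the routine one) is a genuinely deep and, at this level of generality, unestablished claim --- the multitype does not control the Bergman kernel from below without substantial further work --- so the proposal as written has a real gap there; and it is in any case unnecessary. Two smaller slips: the Levi rank $l_\Omega(p)$ is the number of indices $j \ge 2$ with $m_j = 2$ (not $m_j = 1$; the entry $m_1=1$ corresponds to the normal direction), and the inequality $\sum_j 1/m_j \le 1 + l_\Omega(p)/2$ is false in general (take $\rho = \Re z_1 + |z_2|^4$). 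What you actually need, and what upper semicontinuity of the multitype delivers in the degenerate case, is $m_j \ge 2$ for $2 \le j \le k+1$ with $m_{k+1} > 2$ and $m_j = \infty$ for $j \ge k+2$, whence the relevant exponent is strictly below $1 + k/2 = m$; the infinite entries are handled by truncating the Taylor expansion at a large finite order $r$ (as in Lemma \ref{rhoest}), which costs $(n-1-k)/r$ in the exponent but still leaves it strictly below $m$.
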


\begin{proof}
In view of Proposition \ref{rankcount}, it is enough to consider the case
where the rank is not locally constant at $p$.  By lower semi continuity,
this means that there is $k$, $1\le k \le n-1$, such that
$\limsup_{q\to p, q\neq p} l_\Omega(q)=k$ and that $l_\Omega(p)\le k-1$.
Since we assume \eqref{est}, $k=2(m-1)$.

We refer the reader to \cite{Cat} for a complete definition of the Catlin
multitype of $\Omega$ at $p$.
Here we will only recall that it is an $n$-tuple $M=(1,m_2,\dots,m_n)$,
$2\le m_2 \le \dots \le m_n$,
with the following property: if $\Lambda=(\lambda_1,
\dots,\lambda_n)\in [1,\infty]^n$ is a weight  such that $\Lambda < M$
in the lexicographical order, then there exists another weight $\Lambda'$
with $\Lambda < \Lambda' \le M$ and a defining function $\rho$ for $\Omega$ and
a system of complex coordinates which is
distinguished with respect to $\Lambda'$, i.e.
$$
\mbox{if }\alpha, \beta \in \N^n \mbox{ verify }
\sum_{j=1}^n \frac{\alpha_j + \beta_j}{\lambda'_j} < 1,
\mbox{ then }
\frac{\partial^{|\alpha|+|\beta|}\rho}{\partial z^\alpha \partial\bar z^\beta}(p)=0.
$$
Catlin proved that the multitype is upper semicontinuous with respect to
lexicographical order \cite[Theorem 1 (1)]{Cat}.

It is well known that the multitype of a strictly pseudoconvex point
is $(1,2,\dots,2)$. If $l_\Omega \equiv k$ in a neighborhood of a point $q$,
we can take holomorphic coordinates $(z_1, z_2, \dots ,z_n)$ such that
$\{z_{k+2}=\cdots=z_n=0\}$ represents the $n-1-k$ complex dimensional manifold
contained in $\partial \Omega$ and passing through $q$ which exists by Sommer's
theorem \cite{Som}. Then all the derivatives of $\rho$ vanish in those directions,
while the complex tangential directions $z_2,\dots,z_{k+1}$ are ``strictly
pseudoconvex" directions and $z_1$ is the complex normal direction.
Therefore the type will be given by $m_j=2$, $2\le j \le k+1$,
and $m_j=\infty$, $k+2\le j \le n$.

Our assumption on $p$ and the upper semi continuity of the multitype now imply
that at $p$, $m_j\ge 2$, $2\le j \le k+1$,  $m_j=\infty$, $k+2\le j \le n$,
and $m_{k+1}>2$ (otherwise $l_\Omega(p)=k$).

\begin{lemma}
\label{rhoest}
Under the above assumption on the multitype, for any $r$ large enough
and any $\eps>0$,
there exists $A_r >0$, holomorphic coordinates $z_1,\dots,z_n$
and $U$ a neighborhood of $0$ such that for any $z\in U$,
\begin{multline*}
\rho(z) \le \Re z_1 + \eps |z_1| + A_r\left( \sum_{j=2}^k |z_j|^2 + |z_{k+1}|^{3} +
\sum_{j=k+2}^n |z_j|^r\right).
\end{multline*}
\end{lemma}
Accepting Lemma \ref{rhoest}, we see that for  $c>0$
an appropriate constant and $x>0$ small enough,
there is a polydisk contained in $\Omega$
centered at $p+xn_p=(-x,0,\dots,0)$
with respective radii $cx$ in the $z_1$ direction, $cx^{1/2}$ in the $z_2, \dots
z_k$ directions, $cx^{1/3}$ in the $z_{k+1}$ direction, and
$cx^{1/r}$ in the $z_{k+2}, \dots
z_n$ directions.
The usual volume estimate yields that
\begin{multline*}
\log k_\Omega (p+xn_p) \le \left( 1 + \frac{k-1}2 + \frac13 + \frac{n-1-k}r\right)\log \frac1x +O(1)\\
\le \left(\frac56+\frac{k}2+ \frac{n-1-k}r\right) \log \frac1x  +O(1) \ll\left(1+\frac{k}2\right)\log \frac1x
\end{multline*}
if $r$ is chosen large enough.  This contradicts  \eqref{est} with
$m=1+ \frac{k}2$, which was our assumption, so in fact $l_\Omega(p)=k$.
\end{proof}

\begin{proof*}{\it Proof of Lemma \ref{rhoest}.}
Pick an integer $r\ge 3$ such that $\frac1{m_{k+1}} + \frac1{2r} <\frac12$.

By the assumption on multitype, there exists an
admissible weight $\Lambda=(1,\lambda_2,\dots,\lambda_n)$
with $\lambda_j \ge m_j\ge 2$, $2\le j \le k$, $\lambda_{k+1}\ge m_{k+1}>2$,
$\lambda_j \ge r^2$, $k+2\le j \le n$.  There is an admissible system of coordinates
in which we can write the Taylor formula up to order $r$:
$$
\rho(z)= \Re z_1 + \sum_{2\le |\alpha|+|\beta| \le r} \frac1{\alpha ! \beta !}
\frac{\partial^{|\alpha|+|\beta|}\rho}{\partial z^\alpha \partial\bar z^\beta}(0)
z^\alpha \bar z^\beta + o(|z|^r).
$$
The remainder term will be dominated by the desired estimates. We now estimate the
terms in the sum. For any nonzero term, we must have
$\sum_{j=1}^n \frac{\alpha_j+\beta_j }{\lambda_j} \ge 1$.

If $\alpha_1+\beta_1 \ge 1$, then the corresponding term is an $O(|z_1||z|)$,
so will be bound by $\eps |z_1|$. From now on assume $\alpha_1+\beta_1 =0$.

If $\sum_{j=2}^k \alpha_j+\beta_j \ge 2$, then the corresponding term is an
$O(\sum_{j=2}^k |z_j|^2)$.  Write $\alpha'= (\alpha_1, \dots, \alpha_k)$,
$\beta'= (\beta_1, \dots, \beta_k)$, $\alpha''= (\alpha_{k+1}, \dots, \alpha_n)$,
$\beta''= (\beta_{k+1}, \dots, \beta_n)$.

If $\sum_{j=2}^k \alpha_j+\beta_j =1$, then
$$
\left| z^{\alpha'}\bar z^{\beta'}
z^{\alpha''}\bar z^{\beta''} \right|
= O\left( \left| z^{2\alpha'}\bar z^{2\beta'}\right|
+   \left| z^{2\alpha''}\bar z^{2\beta''}\right| \right),
$$
so the first term in that last sum is an $O(\sum_{j=2}^k |z_j|^2)$, and the second one
has exponents which verify
$$
\sum_{j=k+1}^n \frac{2\alpha_j+2\beta_j }{\lambda_j} \ge 2(1-\frac1{m_2})\ge 1,
$$
so will be treated as the next case.

If $\sum_{j=2}^k \alpha_j+\beta_j =0$, then either
$\alpha_{k+1} + \beta_{k+1}\ge 3$ and the corresponding term is an
$O(|z_{k+1}|^3)$. Otherwise,
$$
\frac1{r^2} \sum_{j=k+2}^n \alpha_j+\beta_j  \ge
\sum_{j=k+2}^n \frac{\alpha_j+\beta_j }{\lambda_j} \ge 1 - \frac2{\lambda_{k+1}} > \frac1r,
$$
so $\sum_{j=k+2}^n \alpha_j+\beta_j \ge r$ and the corresponding term is an
$O(\sum_{j=k+2}^k |z_j|^r)$.
\end{proof*}

\section{Proof of the estimates}
\label{pfest}

The key point will be the following:

\begin{lemma}
\label{bihol} Suppose that $\partial \Om$ is Levi-flat near
$p\in\partial\Om$. Then there exist neighborhoods $V\Subset U$ of
$p$ such that for any $q\in\partial\Om\cap V$ one may find a
biholomorphism $\Phi_q$ defined on $U$ such that $\Phi_q(q)=0$,
and $\Phi_q(\Om\cap U)=\{w\in U:\rho_q(w)<0\},$ where $\rho_q$ is
a $\mathcal C^2$-smooth function such that $\rho_q(w)=\Re w_1-f_q (w),$
$\mbox{ord}_0 f_q\ge 2$ and $f_q(0,w'')=0,$ where $w=(w_1,w'').$
Moreover, the Jacobian determinant of $\Phi_q$ is identically $1$,
the $C^2$-norms of $\Phi_q$ and $f_q$ are bounded on $U$,
uniformly in $q,$ and $||\Phi_q(z)+q-z||\le C||z''-q''||.$
\end{lemma}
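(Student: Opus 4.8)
The plan is, for each $q$, to straighten the single leaf of the Levi foliation passing through $q$ by an explicit holomorphic map and then to correct it by a volume‑preserving linear map, all quantitative control coming from Cauchy estimates for the functions that represent the leaves as graphs.

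\textbf{The foliation and the coordinates.} As in the proof of Lemma~\ref{kernslice}, since $\partial\Om$ is Levi‑flat near $p$ the theorem of Sommer and Kraut gives a foliation of a neighborhood $U_0$ of $p$ by complex hypersurfaces (the leaves), each contained in a level set of $\rho$; the leaves are integral manifolds of an integrable holomorphic distribution lying in the kernel of $\mathcal L\rho$, and both the leaves and holomorphic parametrizations of them may be chosen to depend continuously on the base point. I choose coordinates $z=(z_1,z'')\in\C\times\C^{n-1}$ centered at $p$ so that the leaf $F_p$ through $p$ is tangent to $\{z_1=0\}$ and $\rho(z)=\Re z_1+O(\|z\|^2)$. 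Shrinking $U_0$, there are balls $V\Subset U\Subset U_0$ about $p$ such that for every $q\in\partial\Om\cap V$ the $z_1$‑direction is uniformly transverse to the leaf $F_q$; hence $F_q$ is a graph $z_1=\psi_q(z'')$ with $\psi_q$ holomorphic on the $z''$‑projection of $U$, with $q_1=\psi_q(q'')$ since $q\in F_q$, and with $\psi_p\equiv 0$, $D\psi_p(0)=0$. Since $\overline{\partial\Om\cap V}$ is compact and $q\mapsto\psi_q$ is continuous into $\mathcal O$ for the supremum norm, the $\psi_q$ are uniformly bounded, so by Cauchy's inequalities all their derivatives are uniformly bounded on a slightly smaller polydisc.

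\textbf{Construction of $\Phi_q$ and the normal form of $\rho_q$.} Put first $\Phi_q^0(z):=(z_1-\psi_q(z''),\,z''-q'')$. It is holomorphic, $\Phi_q^0(q)=0$, its Jacobian determinant is identically $1$ (the differential is unipotent triangular), it is a biholomorphism with inverse $(w_1,w'')\mapsto(w_1+\psi_q(w''+q''),\,w''+q'')$, and it carries $F_q$ onto $\{w_1=0\}$; so $\{w_1=0\}$, intersected with the image of $U$, lies in $\partial(\Phi_q^0(\Om\cap U))$. Being an $(n-1)$‑dimensional complex submanifold of that $\mathcal C^2$ real hypersurface through $0$, $\{w_1=0\}$ is necessarily its complex tangent space at $0$; a direct computation then gives $d(\rho\circ(\Phi_q^0)^{-1})(0)=\Re(\mu_q\,dw_1)$ with $\mu_q=2(\partial\rho/\partial z_1)(q)$, a nonzero number with $\mu_p=1$ and $\mu_q=1+O(\operatorname{dist}(q,p))$. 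Post‑composing with the diagonal $\C$‑linear map $L_q=\operatorname{diag}(\mu_q,\mu_q^{-1},1,\dots,1)$, which has determinant $1$ and preserves $\{w_1=0\}$, I set $\Phi_q:=L_q\circ\Phi_q^0$ and $\rho_q:=\rho\circ\Phi_q^{-1}$. Then $\det D\Phi_q\equiv 1$, $\Phi_q(q)=0$, $\Phi_q(F_q)=\{w_1=0\}$, and by construction $\rho_q(0)=0$, $d\rho_q(0)=dx_1$; since $\rho_q$ is $\mathcal C^2$, $\rho_q(w)=\Re w_1-f_q(w)$ with $\operatorname{ord}_0 f_q\ge 2$, and $f_q(0,w'')=-\rho_q(0,w'')=0$ because $(0,w'')$ is the $\Phi_q$‑image of a point of $F_q\subset\partial\Om$. (Equivalently, writing $\partial(\Phi_q(\Om\cap U))$ near $0$ as a graph $x_1=\phi_q(\Im w_1,w'')$ with $\phi_q(0,\cdot)\equiv 0$, $d\phi_q(0)=0$, one may take $f_q(w)=\phi_q(\Im w_1,w'')$ directly.)

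\textbf{Uniform bounds and the last estimate.} Because $\Phi_q$ is holomorphic, its $\mathcal C^2$‑norm on $U$ is controlled by $\sup_U\|\Phi_q\|$, hence by $\sup|\psi_q|$ and $|\mu_q|^{\pm 1}$, all uniformly bounded in $q$ by the first step and the continuity of $q\mapsto\mu_q$; likewise $\Phi_q^{-1}$ has uniformly bounded $\mathcal C^2$‑norm, so $\rho_q=\rho\circ\Phi_q^{-1}$ does too, and therefore so does $f_q=\Re w_1-\rho_q$. Finally $\Phi_q^0(z)-(z-q)=(\psi_q(q'')-\psi_q(z''),0,\dots,0)$, of norm at most $\sup\|D\psi_q\|\cdot\|z''-q''\|\le C\|z''-q''\|$; since $L_q=I+O(\operatorname{dist}(q,p))$, the deviation of $\Phi_q$ from the translation $z\mapsto z-q$ is governed by $\psi_q$ and by this $O(\operatorname{dist}(q,p))$‑small correction, which (shrinking $V$ if necessary) yields $\|\Phi_q(z)+q-z\|\le C\|z''-q''\|$. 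The main difficulty is the first step: turning the qualitative Sommer–Kraut statement ``the leaves foliate $\{\rho=\mathrm{const}\}$ complex‑analytically'' into the quantitative fact used here — that the leaves through points of a small ball are graphs of holomorphic functions with uniformly controlled derivatives — which rests on the continuous dependence of the integrable distribution and of its integral manifolds on the base point, exactly as invoked for Lemma~\ref{kernslice}.
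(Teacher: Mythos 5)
Your construction is essentially the one in the paper: the paper also straightens the leaf through $q$ by the shear $\Phi_q(z)=(z_1-\vphi(y(q),z''),\,z''-q'')$ and sets $\rho_q=\rho\circ\Phi_q^{-1}$. The only real differences are the source of the foliation (the paper quotes Barrett--Fornaess \cite{BF}, which gives the leaves directly in the parametrized form $(\vphi(y,z''),z'')$ with $\vphi$ jointly $\mathcal C^2$ and holomorphic in $z''$, so the uniform bounds and the estimate $\|\Phi_q(z)+q-z\|\le C\|z''-q''\|$ follow at once from $q_1=\vphi(y(q),q'')$ and the boundedness of $D_{z''}\vphi$, whereas you re-derive the same uniformity from Sommer--Kraut plus compactness and Cauchy estimates, as in Lemma \ref{kernslice}) and your extra linear factor $L_q$.

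That factor $L_q=\operatorname{diag}(\mu_q,\mu_q^{-1},1,\dots,1)$ is the one step where you genuinely diverge, and it breaks the last assertion of the lemma. Test a point $z$ with $z''=q''$ and $z_1\ne q_1$: then $\Phi_q^0(z)=(z_1-q_1,0'')$, so $\Phi_q(z)+q-z=((\mu_q-1)(z_1-q_1),0'')$, which is not $O(\|z''-q''\|)=0$ unless $\mu_q=1$; shrinking $V$ makes $|\mu_q-1|$ small but not zero, while $z$ still ranges over all of $U$. Indeed, the estimate $\|\Phi_q(z)+q-z\|\le C\|z''-q''\|$ forces $\Phi_q$ to be the translation $z\mapsto z-q$ on the line $\{z''=q''\}$, and hence forces the $\Im w_1$-coefficient of $\rho_q$ at $0$ to equal $\Im\mu_q$, which is $O(\operatorname{dist}(q,p))$ but in general nonzero (e.g.\ $\rho=\Re z_1-(\Im z_1)^2$); so exact $\operatorname{ord}_0 f_q\ge2$ and the translation estimate cannot both hold. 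The paper's own proof omits any such normalization and tacitly accepts the residual linear term $O(\operatorname{dist}(q,p))\,|\Im w_1|$ in $f_q$, which is precisely what is absorbed into the $\eps|w_1|$ terms in the inclusions $F\subset\Phi_{p(z)}(\Om\cap U)\subset G$ of Section \ref{pfest}. You should do the same: drop $L_q$, record that $f_q(w)=O(\operatorname{dist}(q,p))|\Im w_1|+O(\|w\|^2)$, and the rest of your argument (which is correct and matches the paper's) goes through.
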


Notice that this is not the same $\Phi$ as in section \ref{sufberg}.
The proof in this section may seem similar to that in section \ref{sufberg},
in that it also depends on a local foliation; the difference being that now
the leaves of the foliation have the maximum complex dimension $n-1$,
and so there is no vector in the complex tangent space to $\partial \Om$
that is transverse to that foliation (as was the case in section \ref{sufberg}).

\begin{proof} We may assume that $p=0$ and that a defining function
of $\Om$ near $0$ is given by
$$\rho(z) = \Re z_1-g(\Im z_1, z''),\quad\mbox{ord}_0 g\ge 2.$$

We know from \cite{BF} that $\partial \Om$ near 0 is foliated by complex manifolds of the form
$$
\left( \vphi (y,z''), z'' \right),\quad
(y,z'')\in(-\eps,\eps)\times\eps\D^{n-1},
$$
where $\eps>0,$ $\vphi$ is a $\mathcal C^2$-smooth function,
$\vphi(y,\cdot)\in\mathcal O(\eps\D^{n-1})$ and $ \vphi(y,0)=
g(y,0)+iy.$ The Implicit Function Theorem implies that there is a
$\mathcal C^2$-smooth function $y(q)$ such that $q =(\vphi(y(q),q''),q'').$

The following map will be the desired biholomorphism if we choose the neighborhoods in an
appropriate way:
$$\Phi_q (z_1,z'')=(z_1-\vphi(y(q),z''),z''-q'').$$
Indeed, let $\mathcal L_q$ denote the unique leaf in the foliation
passing through the point $q$. Then $\Phi_q(\mathcal L_q) = \{
w_1=0\},$ so $\{ w_1=0\} \subset \partial \Phi_q(\Om)$ (near $0$)
and we may set $\rho_q=\rho\circ\Phi_q^{-1}.$
\end{proof}

Now, we are ready to prove the estimates in Theorem \ref{koba}
under the respective conditions. It follows from the above lemma
that there exist neighborhoods $V\Subset U$ of $p$ and a constant
$\eps>0$ such that for any point $z\in D\cap V$ one has that
\begin{multline*}
\{w:\Re w_1+\eps|w_1|<0\}\cap(\eps^2\D^n)=:F\subset\Phi_{p(z)}(\Om\cap U)\subset\\
G:=\{w:\Re w_1-\eps|w_1|<0\}\cap(2\eps^2\D^n).
\end{multline*}
Note that if $w(z)=\Phi_{p(z)}(z),$ then
$||w(z)+(\delta_\Om(z),0'')||/\delta_\Om(z)\to 0$ as $z\to 0.$

Moreover, if $Y(z)=(\Phi_{p(z)})_{\ast,z}(X),$ then
$Y_1(z)=(1+O(||z||)X_N$ and $Y''(z)=X''.$ Using this and, for
example, the product property of the Kobayashi metric
($\kappa_{D_1\times D_2}=\max(\kappa_{D_1},\kappa_{D_2})$;
cf.~\cite{JP}) and a dilatation of the coordinates, one may find a
constant $c_2>0$ such that
\begin{multline*}
||X_N||/(c_2\delta_\Om(z))\le\kappa_G(w(z);Y(z))\le\kappa_{\Om\cap U}(z;X)\le\\
\kappa_F(w(z);Y(z))\le c_2||X_N||/\delta_\Om(z)+c_2||X||.
\end{multline*}
To get \eqref{growth}, it remains to use that $\kappa_\Om\le\kappa_{\Om\cap U}\le c_3\kappa_\Om$ (cf.~\cite{JP})
and the fact that $\kappa_\Om(z;X)\ge||X||/\mbox{diam}(\Om).$

The proof of the estimate \eqref{metric} is similar. Indeed, since
$m_{D_2}\le m_{D_1}$ and $k_{D_2}\le k_{D_1}$ if $D_1\subset D_2,$
then
$$\beta_G(w(z);Y(z)/s(w(z))\le\beta_{\Om\cap U}(z;X)\le\beta_F(w(z);Y(z))s(w(z)),$$
where $s=k_F/k_G.$ Using, for example, the product property of the
Bergman metric ($\beta^2_{D_1\times
D_2}=\beta^2_{D_1}+\beta^2_{D_2}$; cf.~\cite{JP}) and a dilatation
of the coordinates, one may find a constant $c_4>0$ such that
\begin{multline*}
||X_N||/c_2\delta_\Om(z)\le\beta_G(w(z);Y(z))\le\beta_F(w(z);Y(z))\le\\
c_4||X_N||/\delta_\Om(z)+c_4||Y(z)||,\quad s(w(z))\le c_4.
\end{multline*}
To complete the proof of the estimate \eqref{metric}, it remains
to use that $c_5\beta_\Om\le\beta_{\Om\cap U}\le\beta_\Om/c_5$
(see \cite[Proposition 1]{DFH}) and
$\beta_\Om(z;X)\ge||X||/\mbox{diam}(\Om).$

Finally, as explained at the beginning of section \ref{ranklevi}, the
estimate \eqref{kernel} follows from \eqref{berggrowth} in the
case where the rank $k=0$.  We could also use easier versions of
the arguments above.

\section{Sharp estimates}\label{sharp}

The estimate in Theorem \ref{koba} can be made sharp in the convex
case.

\begin{proposition}\label{convex} Let $\Om\subset\C^n$ be a domain with Levi-flat
boundary near a $\mathcal C^2$-smooth boundary point $p.$ Assume in addition that $\Om$ does not intersect
the real tangent hyperplane to $\partial\Om$ at any boundary point near $p$ (for example, if $\Om$ is convex).
Denote by $\alpha_\Om$ the Carath\'eodory or the Kobayashi metric.
Then there exists a neighborhood $U_p$ of $p$ and a constant $c_p>0$ such that
$$0\le\alpha_\Om(z;X)-||X_N||/2\delta_\Om(z)\le c_p||X||$$
for any $z\in\Om\cap U_p$ and any $X\in\C^n.$
\end{proposition}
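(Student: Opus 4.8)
The plan is to reduce, as in Lemma~\ref{bihol}, to a normalized situation near each boundary point $q$ close to $p$, and then to estimate the Kobayashi (hence, a fortiori, the Carath\'eodory) metric both from above and from below by a one-variable computation in the complex normal direction. Since $\Om$ is locally Levi-flat near $p$, the boundary is foliated by complex hypersurfaces $\mathcal L_q$, and by Lemma~\ref{bihol} we may apply, for each $q=p(z)\in\partial\Om\cap V$, a unit-Jacobian biholomorphism $\Phi_q$ with $\Phi_q(q)=0$ and $\Phi_q(\Om\cap U)=\{\Re w_1<f_q(w)\}$ with $\mbox{ord}_0 f_q\ge 2$ and $f_q(0,w'')=0$. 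The extra convexity-type hypothesis — that $\Om$ avoids the real tangent hyperplane at every nearby boundary point — is precisely what upgrades the two-sided comparison $\{\Re w_1+\eps|w_1|<0\}\subset\Phi_q(\Om\cap U)\subset\{\Re w_1-\eps|w_1|<0\}$ to the cleaner statement that, after the coordinate change, $\Phi_q(\Om\cap U)$ is squeezed between half-spaces $\{\Re w_1<0\}$ from one side and $\{\Re w_1< \eps|w''|^2\}$ (or similar small perturbation) from the other, with the normal slice being exactly a half-plane up to second order.

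First I would pin down the upper bound. For $z\in\Om\cap U_p$, write $w(z)=\Phi_{p(z)}(z)$; as noted after Lemma~\ref{bihol}, $w(z)=(-\delta_\Om(z)(1+o(1)),0'')$ and $Y(z)=(\Phi_{p(z)})_{*,z}(X)$ has $Y_1(z)=(1+O(\|z\|))X_N$, $Y''(z)=X''$. Because $\Om$ does not meet the real tangent hyperplane at $p(z)$, the image domain is contained in a half-space $\{\Re w_1<0\}$ translated appropriately; restricting a competitor disc to the $w_1$-coordinate and using the explicit value $\kappa_{\{\Re w_1<0\}}((-t,0);(v,0))=|v|/(2t)$ gives $\kappa_\Om(z;X)\ge \|X_N\|/(2\delta_\Om(z))(1+o(1))$. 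For the matching upper bound I would build an explicit competitor: combine the extremal disc for the half-plane factor in the $w_1$-direction with a linear (small-disc) map in the $w''$-directions landing inside $\eps^2\D^{n-1}$, then pull back by $\Phi_{p(z)}^{-1}$; this yields $\kappa_\Om(z;X)\le \|X_N\|/(2\delta_\Om(z))+c\|X\|$ after absorbing the $o(1)$ and $O(\|z\|)$ errors into the $c\|X\|$ term and shrinking $U_p$. The inequality $\alpha_\Om\le\kappa_\Om$ handles the Carath\'eodory case for the upper bound, and for the lower bound one uses instead that the projection to the $w_1$-coordinate composed with the biholomorphism is a bounded holomorphic function into a half-plane, so the Carath\'eodory metric already sees $\|X_N\|/(2\delta_\Om(z))$.

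The main obstacle will be controlling the errors uniformly in $q=p(z)$ as $z\to p$, so that the $o(1)$ factor multiplying $\|X_N\|/\delta_\Om(z)$ can genuinely be moved into an additive $c_p\|X\|$ term rather than a multiplicative one. This requires: (a) the uniform $\mathcal C^2$-bounds on $\Phi_q$ and $f_q$ from Lemma~\ref{bihol}, giving $\|w(z)+(\delta_\Om(z),0'')\|\le C\|z''-p(z)''\|$ and hence $\|w(z)+(\delta_\Om(z),0'')\| = o(\delta_\Om(z))$ uniformly; (b) a quantitative version of the squeezing between half-spaces, where the perturbation of $\{\Re w_1<0\}$ is $O(\|w''\|^2)=O(\eps^4)$, small compared to $\delta_\Om(z)$ only after one first chooses $\eps$ and then lets $z\to p$ — so the order of quantifiers (fix $\eps$ small; shrink $U_p$ accordingly) must be handled carefully. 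A secondary technical point is that $\|Y(z)\|\asymp\|X\|$ uniformly, which again follows from the uniform derivative bounds on $\Phi_q$. Once these uniformities are in place, the one-variable estimate $\kappa_{\{\Re w_1<0\}}((-t,0);(v,0))=|v|/(2t)$ does all the real work, and the constant $1/2$ in the statement is exactly the constant appearing there.
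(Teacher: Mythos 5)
Your proposal has a genuine gap in both bounds, and in both cases the problem is the same: you route everything through the normalizing biholomorphism $\Phi_q$ of Lemma \ref{bihol} and the sector-type inclusions $\{\Re w_1+\eps|w_1|<0\}\cap\eps^2\D^n\subset\Phi_q(\Om\cap U)\subset\{\Re w_1-\eps|w_1|<0\}\cap 2\eps^2\D^n$, which are only first-order accurate and therefore can only ever produce multiplicative errors of the form $(1\pm C\eps)\|X_N\|/\delta_\Om(z)$. For the lower bound this is fatal outright: the statement demands the exact inequality $\alpha_\Om(z;X)\ge\|X_N\|/2\delta_\Om(z)$, and a factor $(1+o(1))$ of unknown sign cannot be discarded; moreover your "projection to the $w_1$-coordinate" is only defined on $\Om\cap U$ (so it does not directly bound $\gamma_\Om$ from below) and does not map into a half-plane, only into a sector, since $\Phi_q$ is not affine and does not carry the real tangent hyperplane to $\{\Re w_1=0\}$. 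The paper gets the exact lower bound in one line by using the hypothesis where it lives, in the ambient coordinates: $\Om\subset\Pi_{p_z}$ (the open side of the real tangent hyperplane), hence $\gamma_\Om(z;X)\ge\gamma_{\Pi_{p_z}}(z;X)=\|X_N\|/2\delta_\Om(z)$, since the half-space is biholomorphic to $\mathbb H\times\C^{n-1}$.

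For the upper bound you correctly identify the obstacle --- that the error multiplying $\|X_N\|/\delta_\Om(z)$ must become additive --- but your proposed fix (choosing $\eps$ first and then shrinking $U_p$) does not work: once $\eps$ is fixed, the term $C\eps\|X_N\|/\delta_\Om(z)$ still blows up as $\delta_\Om(z)\to0$ and is never dominated by $c_p\|X\|$. What is actually needed is a comparison domain that is tangent to \emph{second} order in the normal slice, i.e.\ an internally tangent disc rather than a sector: a disc of radius $\eps$ tangent at the boundary gives $\kappa\le\frac1{\eta(2-\eta/\eps)}=\frac1{2\eta}+O(1/\eps)$, an additive error. This is exactly how the paper proceeds: it takes $V_p$ with $\Om_p=\Om\cap V_p$ convex, uses Lempert's theorem and the resulting subadditivity to split $\kappa_{\Om_p}(z;X)\le\kappa_{\Om_p}(z;X_N)+\kappa_{\Om_p}(z;X_T)$, bounds the tangential part by $c'\|X_T\|$ using the already-established Levi-flat estimates, and bounds the normal part by $\|X_N\|/2\delta_\Om(z)+c''\|X_N\|$ via the one-variable internally-tangent-disc argument of Proposition \ref{planar}. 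Without the second-order tangency (or some equivalent device) your competitor-disc construction cannot reach the sharp constant $1/2$ with an additive error.
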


Recall that the Carath\'eodory metric is defined by
$$\gamma_\Om(z;X)=\sup\{|f'(z)X|:f\in\mathcal O(D,\Bbb D)\}.$$

\begin{proof} For the lower bound observe that $\Om$ is on the one side $\Pi_{p_z}$
of the real tangent hyperplane plane to $\partial\Om$ at $p_z$ and hence
$$\gamma_\Om(z;X)\ge\gamma_{\Pi_{p_z}}(z;X)\ge||X_N||/2\delta_\Om(z).$$

For the upper estimate choose a neighborhood $V_p$ such that
$\Om_p=\Om\cap V_p$ is convex. Then
$\kappa_\Om\le\kappa_{\Om_p}=\gamma_{\Om_p}$ and hence
$$\kappa_\Om(z;X)\le\kappa_{\Om_p}(z;X_N)+\kappa_{\Om_p}(z;X_T).$$
We already know the the Levi flatness implies that
$$\limsup_{z\to p}\kappa_{\Om_p}(z;X_T)\le c'||X_T||.$$
On the other hand, the proof of Proposition \ref{planar} implies that
$$\limsup_{z\to p}(\kappa_{\Om_p}(z;X_N)-||X_N||/2\delta_\Om(z))\le c''||X_N||$$
which completes the proof.
\end{proof}

Finally, we present a sharp estimate for invariant metrics in the planar case.

\begin{proposition}\label{planar} Let $p$ be a $C^{1,1}$-smooth boundary point of
a planar domain $D.$ Then
\begin{equation}\label{ck}
\limsup_{D\ni z\to p}|\alpha_D(z)-1/2\delta_D(z)|<\infty,
\end{equation}
\begin{equation}\label{ber}
\limsup_{D\ni z\to p}|\beta_D(z)-1/\sqrt 2\delta_D(z)|<\infty,
\end{equation}
where $\alpha$ is the Carath\'eodory or Kobayashi metric, and $\beta_D$ is the Bergman metric,
all taken at the unit vector. Moreover,
\begin{equation}\label{ker}
\limsup_{z\to p}|k_D(z)-1/2\sqrt\pi\delta_D(z)|<\infty.
\end{equation}
\end{proposition}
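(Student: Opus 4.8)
The plan is to squeeze $D$, in a neighborhood of $p$, between two model domains on which the Carathéodory metric $\gamma$, the Kobayashi metric $\kappa$, the Bergman kernel $k$, the functional $m$ and the Bergman metric $\beta$ are all explicitly computable, and then to exploit the fact that each of these five objects is non-increasing when the domain is enlarged (for $\gamma,\kappa$ this is standard; for $k$ and $m$ it follows by restricting competitors and renormalizing in $L^2$; $\beta=m/k$ is then squeezed by squeezing numerator and denominator separately).

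First I would record the geometric input. The assumption that $\partial D$ is $\mathcal C^{1,1}$ at $p$ is equivalent to a uniform two-sided ball condition near $p$: there exist $R>0$ and a neighborhood $U$ of $p$ such that for every $q\in\partial D\cap U$ the interior osculating disc $B(q+Rn_q,R)$ lies in $D$ and the exterior osculating disc $B(q-Rn_q,R)$ is disjoint from $\overline D$. This is the only place $\mathcal C^{1,1}$ (and not merely $\mathcal C^1$) smoothness is used, and it forces $R$ and $U$ to be small relative to the scale on which $\partial D$ is a $\mathcal C^{1,1}$ graph, so that these discs remain inside that coordinate patch and the inclusions hold globally in $D$. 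Shrinking $U$, for $z\in D\cap U$ the foot point $p_z$ is well defined, lies in $U$, and $\delta:=\delta_D(z)<R$. Writing $a_z=p_z+Rn_{p_z}$, $c_z=p_z-Rn_{p_z}$, $B_z^+=B(a_z,R)$, $B_z^-=\C\setminus\overline{B(c_z,R)}$, one gets $B_z^+\subset D\subset B_z^-$; and since $z=p_z+\delta n_{p_z}$ lies on the common normal, $|z-a_z|=R-\delta$, $|z-c_z|=R+\delta$, and $\delta_{B_z^+}(z)=\delta_{B_z^-}(z)=\delta$.

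Next I would carry out the model computations. For a disc, scaling from $\D$ gives, at a unit vector,
\[
\gamma_{B(a,R)}(z)=\kappa_{B(a,R)}(z)=\frac{R}{R^2-|z-a|^2},\qquad
k_{B(a,R)}(z)=\frac{R}{\sqrt\pi\,(R^2-|z-a|^2)},
\]
\[
m_{B(a,R)}(z)=\frac{\sqrt2\,R^2}{\sqrt\pi\,(R^2-|z-a|^2)^2},\qquad
\beta_{B(a,R)}(z)=\frac{\sqrt2\,R}{R^2-|z-a|^2}.
\]
For $B_z^-$ I would use the inversion $\iota(w)=R/(w-c_z)$, a biholomorphism of $B_z^-$ onto the punctured disc $\D\setminus\{0\}$; since every bounded, resp. $L^2$-holomorphic, function on $\D\setminus\{0\}$ extends holomorphically across $0$, one has $\gamma_{\D\setminus\{0\}}=\gamma_\D$ and the Bergman spaces of $\D\setminus\{0\}$ and $\D$ coincide. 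Transporting the disc formulas through $\iota$ (using that $\gamma$ and $\beta$ are biholomorphic invariants while $k$ and $m$ pick up the Jacobian factor) yields, at a unit vector,
\[
\gamma_{B_z^-}(z)=\frac{R}{(R+\delta)^2-R^2},\qquad
k_{B_z^-}(z)=\frac{R}{\sqrt\pi\,((R+\delta)^2-R^2)},\qquad
m_{B_z^-}(z)=\frac{\sqrt2\,R^2}{\sqrt\pi\,((R+\delta)^2-R^2)^2}.
\]
Note that the Kobayashi metric of $\D\setminus\{0\}$ (which blows up at the puncture) never enters: $B_z^-$ is used only as an outer bound, and for the Carathéodory metric, $k$ and $m$.

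Finally I would assemble the estimates. From $B_z^+\subset D\subset B_z^-$ and $\gamma_D\le\kappa_D$ and the monotonicity above we get, at a unit vector,
\[
\gamma_{B_z^-}(z)\le\gamma_D(z)\le\kappa_D(z)\le\kappa_{B_z^+}(z),\qquad
k_{B_z^-}(z)\le k_D(z)\le k_{B_z^+}(z),\qquad
m_{B_z^-}(z)\le m_D(z)\le m_{B_z^+}(z),
\]
so that $\beta_D(z)=m_D(z)/k_D(z)$ lies between $m_{B_z^-}(z)/k_{B_z^+}(z)$ and $m_{B_z^+}(z)/k_{B_z^-}(z)$. Substituting $|z-a_z|=R-\delta$, $|z-c_z|=R+\delta$, each of the bounding quantities equals $\tfrac1{2\delta}(1+O(\delta))$ for $\gamma$ and $\kappa$, $\tfrac1{2\sqrt\pi\,\delta}(1+O(\delta))$ for $k$, and $\tfrac1{\sqrt2\,\delta}(1+O(\delta))$ for $\beta$, with the error controlled by a multiple of $1/R$; subtracting the respective leading terms $\tfrac1{2\delta_D(z)}$, $\tfrac1{2\sqrt\pi\,\delta_D(z)}$, $\tfrac1{\sqrt2\,\delta_D(z)}$ leaves quantities bounded as $z\to p$, which is exactly \eqref{ck}, \eqref{ker} and \eqref{ber}. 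The main obstacle is entirely in the first step — making sure the two-sided ball condition delivers an inclusion valid on all of $D$ (hence the need for $\mathcal C^{1,1}$ and for small $R,U$); once that is in place, no localization of the Bergman kernel or metrics is needed, and the proof reduces to the elementary model computations and the removable-singularity identity for $\D\setminus\{0\}$.
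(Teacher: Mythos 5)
Your proposal is correct and follows essentially the same route as the paper: both squeeze $D$ near $p$ between an interior osculating disc and the complement of an exterior osculating disc (the $C^{1,1}$ two-sided ball condition), use monotonicity of $\kappa$, $\gamma$, $k$ and $m$ under inclusion together with $\gamma\le\kappa$ and removability of the puncture for the outer model, and bound $\beta=m/k$ by squeezing numerator and denominator separately; the explicit model values you compute agree with the paper's $\frac{1}{\eta(2\mp\eta/\eps)}$ and $\frac{\sqrt2(2\pm\eta/\eps)}{\eta(2\mp\eta/\eps)^2}$. The only cosmetic difference is that the paper first normalizes by an affine map $\Phi_\zeta$ so that the two model domains are fixed, whereas you let the osculating discs move with $z$; you also write out the kernel estimate \eqref{ker}, which the paper omits as easier.
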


In the $C^1$-smooth case a weaker result is known, namely (see \cite{JN}, Proposition 2 and the remark in the end)
$$\lim_{z\to p}\alpha_D(z)\delta_D(z)=1/2,\quad \lim_{z\to p}\beta_D(z)\delta_D(z)=1/\sqrt 2.$$

\begin{proof} Let $r$ be the double signed distance to $\partial D.$
For $\zeta\in D$ near p, set $\Phi_\zeta(z)=\partial r/\partial z(p_\zeta)(z-p_\zeta),$
$D_\zeta=\Phi_\zeta(D)$ and $\delta_\zeta=\Phi_\zeta(\zeta).$ Note that
$$\alpha_D(z)=\alpha_{D_\zeta}(\eta_\zeta),\quad\delta_D(z)=\eta_\zeta.$$
Then there exists an $\eps>0$ such that
$$\{|z-\eps|<\eps\}=:F_\eps\subset D_\zeta\subset G_\eps:=\{|z+\eps|>\eps\}$$
for any $\zeta\in D$ near $p.$ Hence
\begin{multline*}
\frac{1}{\eta_\zeta(2-\eta_\zeta/\eps)}=\kappa_{F_\eps}(\eta)\ge\kappa_{D_\zeta}(\eta_\zeta)\ge
\gamma_{D_\zeta}(\eta_\zeta)\ge\gamma_{G_\eps}(\eta_\zeta)=\frac{1}{\eta_\zeta(2+\eta_\zeta/\eps)}
\end{multline*}
which implies \eqref{ck}.

The proof of \eqref{ber} is similar. Let
$m_\Om(z)=\beta_\Om(z){k_\Om(z)}.$ Recall that $k_\Om\le
k_\Theta$ and $m_\Om\le m_\Theta$ if $\Theta\subset\Om.$ Then
$$\frac{\sqrt2(2+\eta_\zeta/\eps)}{\eta_\zeta(2-\eta_\zeta/\eps)^2}=\frac{m_{F_\eps}(\zeta)}{k_{G_\eps}(\zeta)}\ge
\beta_{D_\zeta}(\eta_\zeta)\ge\frac{m_{G_\eps}(\eta_\zeta)}{k_{F_\eps}(\eta_\zeta)}=\frac{\sqrt2(2-\eta_\zeta/\eps)}
{\eta_\zeta(2+\eta_\zeta/\eps)^2}$$ which implies \eqref{ber}.

We skip the proof of \eqref{ker}, since it is even easier than that of \eqref{ber}.
\end{proof}

\end{document}